\newcommand{\na}{\mathbb{N}}
\newcommand{\rr}{\mathcal{L}}
\newcommand{\G}{\mathcal{G}}
\newcommand{\ff}{\mathbb{F}}
\newcommand{\pl}{\mathbb{P}}
\newcommand{\cod}{\mathcal{C}}
\newcommand{\dif}{\operatorname{Diff}}
\newcommand{\con}{\operatorname{Con}}
\renewcommand{\cot}{\operatorname{Cotr}}
\newcommand{\sop}{\operatorname{Supp}}
\newcommand{\tr}{\operatorname{Tr}}
\newcommand{\gal}{\operatorname{Gal}}
\newcommand{\divisor}{\operatorname{Div}}
\newcommand{\sk}{\smallskip}
\newcommand{\msk}{\medskip}
\newtheorem{thm}{Theorem}[section]
\newtheorem{prop}[thm]{Proposition}
\newtheorem{lem}[thm]{Lemma}
\newtheorem{coro}[thm]{Corollary}
\theoremstyle{definition}
\newtheorem{rem}[thm]{Remark}
\newtheorem{exam}[thm]{Example}
\newtheorem{defi}[thm]{Definition}
\theoremstyle{remark}
\def\blue{\color{blue}}   
\newcommand{\podesta}[1]{\textcolor{blue}{#1}}
\begin{document}
\numberwithin{equation}{section}
\title{The conorm code of an AG-code}
\author[M. Chara, R. Podest\'a, R. Toledano]{Mar\'ia Chara, Ricardo A.\@ Podest\'a, Ricardo Toledano}
\dedicatory{\today}
\keywords{AG-codes, cyclic codes, conorm, algebraic function fields}
\thanks{2010 {\it Mathematics Subject Classification.} Primary 94B27;\,Secondary 94B15, 20B25.}
\thanks{Partially supported by CONICET, UNL CAI+D 2016, SECyT-UNC, CSIC}

\address{Mar\'ia Chara, FIQ -- CONICET, Universidad Nacional del Litoral, Santiago del Estero 2829, (3000) Santa Fe, Argentina. {\it E-mail: mchara@santafe-conicet.gov.ar}}

\address{Ricardo A.\@ Podest\'a, FaMAF -- CIEM (CONICET), Universidad Nacional de Córdoba, Av.\@ Me\-di\-na Allende 2144, 
	(5000) C\'ordoba, Argentina. {\it E-mail: podesta@famaf.unc.edu.ar}}

\address{Ricardo Toledano, Facultad de Ingeniería Química, Universidad Nacional del Litoral, Santiago del Estero 2829, (3000) Santa Fe, Argentina.  
{\it E-mail: ridatole@gmail.com}}

\begin{abstract}
Given a suitable extension $F'/F$ of algebraic function fields over a finite field $\ff_q$, we introduce the conorm code $\con_{F'/F}(\cod)$ defined over $F'$ which is constructed from an algebraic geometry code $\cod$ defined over $F$.  We study the parameters of $\con_{F'/F}(\cod)$ in terms of the parameters of $\cod$, the ramification behavior of the places used to define $\cod$ and the genus of $F$. 
In the case of unramified extensions of function fields we prove that $\con_{F'/F}(\cod)^\perp = \con_{F'/F}(\cod^\perp)$ when the degree of the extension is coprime to the characteristic of $\ff_q$. We also study the conorm of cyclic algebraic-geometry codes and we show that some repetition codes, Hermitian codes and all Reed-Solomon codes can be represented as conorm codes.
\end{abstract}

\maketitle

\section{Introduction}
Let $\ff_q$ be a finite field with $q$ elements. For a given trascendental element $x$ over $\ff_q$, the field of fractions of the ring $\ff_q[x]$ is denoted as $\ff_q(x)$ and it is called a rational function field over $\ff_q$. An (algebraic) function field $F$  of one variable over $\ff_q$ is a field extension  $F/\ff_q(x)$ of finite degree where $x\in F$ is a trascendental element over $\ff_q$.  The finite field $\ff_q$ is called the field of constants of $F$ and  we  will always assume that $\ff_q$ is the full constant field of $F$, that is $\ff_q$ is algebraically closed in $F$. We will frequently use the symbol $F/\ff_q$ to express that $F$ is a function field over $\ff_q$.

Let $F/\ff_q$ be a function field. The \textit{Riemann-Roch space} associated to a divisor $G$ of $F$ is the vector space over $\ff_q$ defined as
$$\mathcal{L}(G)=\{x\in F\,:\, (x)\geq G\}\cup \{0\},$$ 
where $(x)$ denotes the principal divisor of $x$. It turns out that $\rr(G)$ is a finite dimensional vector space over $\ff_q$ for any divisor $G$ of $F$ (see, for instance, Proposition 1.4.9 of \cite{Sti}). 

The {\em dimension} $\ell(G)$ of a divisor $G$ of $F$ is defined as the dimension of $\mathcal{L}(G)$ as a vector space over $\ff_q$. 
An important result in the theory of algebraic function fields relates the dimension of some divisors and the genus of the considered function field. More precisely (see Theorem 1.5.15 of \cite{Sti}) given a function field $F/\ff_q$ of genus $g$, a divisor $G$ and a  canonical divisor $W$ of $F$, the Riemann-Roch Theorem asserts that
$$\ell(G)= \deg(G)+1-g+\ell(W-G).$$

Given disjoint divisors $D=P_1+\cdots+P_n$ and $G$ of $F/\ff_q$, where $P_1,\ldots,P_n$ are different rational places,   
the \textit{algebraic geometry code} (AG-code for short) associated to $D$ and $G$ is defined as
\begin{equation} \label{CDG}
C_\rr^F (D,G) = \{(x(P_1),\ldots, x(P_n))\,:\,x\in \rr(G)\}\subseteq (\ff_q)^n,
\end{equation}
where $x(P_i)$ denotes the residue class of $x$ modulo $P_i$ for $i=1,\ldots,n$. If the context is clear, we will simply write $C_{\rr}$ instead of $C_\rr^F (D,G)$.

It is well known (see Theorem 2.2.2 of \cite{Sti}) that $C_\rr(D,G)$ is an $[n,k,d]$-code with $k=\ell(G)-\ell(G-D)$ and 
\begin{equation} \label{d>}
d\geq n-\deg G.
\end{equation}
 Also, if $\deg G <n$, then 
\begin{equation} \label{k>}
k=\ell(G) \ge  \deg G+1-g 
\end{equation}
and hence $k+d \ge n+1-g$.
If, in addition $2g-2 < \deg G$, we have the equality  
$k= \deg G+1-g$. 

The \textit{designed distance} of the code is $d^*=n-\deg G$ 
and, similarly, one can define its \textit{designed dimension} as
\begin{equation}\label{desdim}
k^*=\deg G+1-g.
\end{equation}
Thus $d\ge d^*$ and if $\deg G<n$ then $k \ge k^*$, with equality if $2g-2<\deg G$.

From these facts, we see that if $\deg G>2g-2$, then the dimension of $C_\rr(D,G)$ can be computed in an exact way knowing the degree of $G$ and the genus of $F$. On the other hand, if $\deg G\leq 2g-2$, then $\ell(D-G)$ does not vanish and no formula is available to compute the dimension of $C_\rr(D,G)$.

Sometimes it is useful to distinguish 3 levels of AG-codes. Let $\cod=C_\rr(D,G)$ be as in \eqref{CDG}. If $\deg G<n$ we will say that $\cod$ is a \textit{moderate} AG-code (or \textit{MAG-code}). A moderate AG-code which also satisfies $2g-2<\deg G$ will be called a \textit{strong} AG-code (or \textit{SAG-code}). Finally, a \textit{weak} AG-code (or \textit{WAG-code}) will be an AG-code which is not moderate.

The main goals of this paper are to introduce the concept of the conorm code associated to an AG-code, to study some interesting properties of this new code and to show that some well known families of codes such as repetition codes, Hermitian codes and Reed-Solomon codes can be obtained as conorm codes from other more basic codes.

\subsubsection*{Outline and results}
In Section 2, given a suitable extension $F'/F$ of functions fields $F'/\ff_{q^t}$ and $F/\ff_q$, we define the $q^t$-ary conorm code $\cod' = C_\rr^{F'}(D',G')$ of the $q$-ary AG-code $\cod =C_\rr^F(D,G)$ by lifting the code $\cod$ using the conorm map of divisors. 
We denote this code as
$$\cod' = \con_{F'/F} (\cod).$$
If the context is clear, we will simply write $\con(\cod)$ instead of $\con_{F'/F}(\cod)$.
 
In the next section we deal with the parameters and levels of conorm codes. In Proposition~\ref{prop C'} and Corollary \ref{corito} we give bounds for the parameters of $\cod'$ in terms of the parameters of $\cod$. 
Then, we consider conorm codes defined over geometric extensions, that is extensions of function fields $F'/F$ having both the same field of constants $\ff_q$ (i.e.\@ $t=1$) and we study the AG-levels of the construction (see Corollary \ref{corito2}).

In Section 4, we study conorm codes defined over unramified extensions and duality. In general, the dual of the conorm code is not the conorm of the dual code (see Example \ref{counterexample}). However, over unramified extensions, this is indeed the case under some conditions. More precisely, in Theorem \ref{prop dual} we show that if $F'/F$ is an unramified geometric extension of degree $m$ of function fields over $\ff_q$ then 
$$\con(\cod^\perp) = \con(\cod)^\perp$$ holds 
provided that $(m,q)=1$.

\goodbreak 

In Section 5, we consider the conorm of cyclic AG-codes. We show that, under certain conditions this construction preserves cyclicity. In the particular case that $F'/F$ is a geometric Galois extension of function fields over $\ff_q$ and every place in the support of a divisor $D$ of $F$ is totally ramified in $F'$, the conorm code $\cod'=\con(C_\rr^F(D,G))$ defined over $F'$ and the AG-code $C_\rr^F(D,G)$ are different representations of the same algebraic geometry code over $\ff_q$ (see Theorem \ref{teo conorm}). 
We believe this may have some applications on code-based cryptography.

Finally, in the last section we show that in some general cases, repetition codes, Hermitian codes and Reed-Solomon codes can be represented as conorm codes, i.e.\@ they can be seen as the conorm code of simpler AG-codes defined over function fields of smaller genus.

\section{The conorm code of an AG-code}   \label{sec2}
Let $F/\ff_q$ be a function field and let us denote as usual the set of places of $F$ by $\pl(F)$ and the abelian group of divisors of $F$ by $\divisor(F)$. Let $F'/\ff_{q^t}$ be a function field such that $F'/F$ is a finite extension. We will show how to construct an AG-code $\cod'=C_\rr^{F'}(D',G')$ over $\ff_{q^t}$ starting from an AG-code 
$\cod=C_\rr^F(D,G)$ over $\ff_q$. This will be accomplished by using the conorm map on divisors 
$$\con_{F'/F} : \divisor(F) \rightarrow \divisor(F'),$$ 
that we now recall.  
If $P$ is a place in $F$, the conorm divisor of $P$ is the divisor 
$$\con_{F'/F}(P) = \sum_{_{P'|P}} e(P'|P) \, P'$$ 
in $F'$, where $e(P'|P)$ is the \textit{ramification index} of the place $P'$ in $F'$ over $P$. For $Q\in \pl(F)$ and  $A=\sum_P n_P P \in \divisor(F)$ we define $v_Q(A)=n_Q$. 
Now, the \textit{conorm divisor} of $A$ in $F'$ is given by
\begin{equation} \label{conorm}
A':= \con_{F'/F} (A) = \sum_P n_P \con_{F'/F}(P). 
\end{equation}

From now on the extension $F'/F$ is a function field extension of degree $m$.
Let $\cod=C_\rr^F(D,G)$ be an AG-code of length $n$ defined over $\ff_q$,
where $G$ and $D=P_1+\cdots+P_n$ are disjoint divisors and $P_1,\ldots,P_n$ are different rational places of $F$. 
For any place $P$ in the support of $D$ let us denote by $m_P \in \{1, \ldots, m\}$ the number of different places of $F'$ over $P$. 
Suppose that the extension $F'/F$ is such that for every place $P$ in the support of $D$  we have that 
\begin{equation}\label{condicion} 
e(P'|P) = \frac{m}{m_P},
\end{equation}
for any place $P'$ of $F'$ lying above $P$. Then  all the places of $F'$ lying above $P_i$ are rational for $i=1,\ldots, n$.
Denote by 
\begin{equation} \label{Qis}Q_i^{(1)},\ldots,Q_i^{(m_{P_{i}})}\end{equation}
the rational places of $F'$ lying above $P_i$ and put 
\begin{equation} \label{DrDs}
D_{i}=Q_i^{(1)}+\cdots+Q_i^{(m_{P_{i}})} .
\end{equation}

Note that by \eqref{conorm} we have
\begin{equation*} 
 \con_{F'/F} (P_i) = \tfrac{m}{m_{P_i}}\sum_{j=1}^{m_{P_i}} Q_i^{(j)} = \tfrac{m}{m_{P_i}} D_i 
\end{equation*}
and since $D$ and $G$ are disjoint divisors of $F$ then $\con_{F'/F}(D)$ and $\con_{F'/F}(G)$ are disjoint divisors of $F'$.

With the above notation we have the following definition of an AG-code ``hanging over'' another one.
\begin{defi} \label{concod}
Given a code $\cod = C_\rr^F(D,G)$ as in \eqref{CDG} and a finite extension $F'/F$ of function fields such that \eqref{condicion} holds, we define the conorm code associated to $\cod$, or just the \textit{conorm of $\cod$}, as 
\begin{equation} \label{Con C}
\cod' = \con_{F'/F} (\cod) = C_\rr^{F'}(D',G'), 
\end{equation}
where
$$D' =  \tfrac{1}{m} \sum_{i=1}^n m_{P_i}\con_{F'/F} (P_i) 
\qquad \text{ and } \qquad G'=\con_{F'/F} (G).$$
That is, in the notation of \eqref{Qis} and \eqref{DrDs},
$$D' =D_1+\cdots+D_n=  \sum_{i=1}^{n} (Q_i^{(1)} + \cdots + Q_i^{(m_{P_i})}).$$
When $F'/F$ is understood, we will write $\con(\cod)$ instead of $\con_{F'/F} (\cod)$. Similarly for 
$\con_{F'/F} (P_i)$ and $\con_{F'/F} (G)$.
\end{defi}

Clearly $\cod'$ is an AG-code defined over $F'$. For $m=1$ the construction is trivial and $\cod'=\cod$. 
By Hurwitz genus formula (see Theorem 3.4.13 of \cite{Sti}), the genus $g'=g(\podesta{F'})$ of $F'$ is given by 
\begin{equation} \label{g'}
g' = \tfrac{m}{t} (g-1) + \tfrac 12 \deg \mathrm{Diff}(F'/F)  + 1 ,
\end{equation}
where $g=g(F)$ is the genus of $F$ and
$$\mathrm{Diff}(F'/F) = \sum_{P} \sum_{P'|P} d(P'|P) \, P',$$
is the \textit{different divisor} of $F'/F$ with $d(P'|P)$ the \textit{different exponent} of $P'$ over $P$. Hence, since $d(P'|P)\ge 0$ for every $P'|P$, 
we have
\begin{equation}\label{g' >= g}
g'=g(F') \ge g(F)=g.
\end{equation}

\section{Parameters and levels}
We  study now some parameters and levels of the conorm code $\con(\cod)$ of an AG-code $\cod$ in different situations. 
We begin with the following elementary estimates for the parameters of a conorm code.

\begin{prop} \label{prop C'}
Let $F'/\ff_{q^t}$ and $F/\ff_q$ be two function fields such that $F'/F$ is a finite extension  of degree $m\ge 2$. 
Let $[n,k,d]$ and  $[n',k',d']$ be the parameters of $\cod=C_\rr(D,G)$ and $\cod'=\con_{F'/F}(\cod)$ respectively, where $D=P_1+\cdots+P_n$.
Then
\begin{equation} \label{n'}
n \le n' \le  mn,
\end{equation}
and
\begin{equation} \label{d'}
d' \ge n'-\tfrac{m}{t} \deg G.
\end{equation}
Moreover if $\cod'$ is a MAG-code (i.e.\@ $\deg G' < n'$) then
\begin{equation} \label{k'}
k' \ge \tfrac{m}{t} k^* - \tfrac 12 \deg \mathrm{Diff} (F'/F),
\end{equation}
where $k^*=\deg G+1-g$ is the designed dimension given in \eqref{desdim}. 
\end{prop}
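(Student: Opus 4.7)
The plan is to deduce each of the three inequalities in the statement directly from the definition of the conorm code together with facts already recalled in the paper: the bound \eqref{d>} on the minimum distance, the bound \eqref{k>} on the dimension, the Hurwitz genus formula \eqref{g'}, and the behaviour of $\deg$ under the conorm map.

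First I would establish the length bound \eqref{n'}. By Definition \ref{concod} we have $n'=\sum_{i=1}^{n} m_{P_i}$, where $m_{P_i}$ is the number of places of $F'$ lying above $P_i$. Since $1\le m_{P_i}\le m=[F':F]$ for each $i$ (the extensions of a place are at most $m$ in number, and at least one exists), summing over $i=1,\dots,n$ gives $n\le n'\le mn$.

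Next I would compute $\deg G'$. The key identity is that for any divisor $A$ of $F$,
\begin{equation*}
\deg_{F'}\bigl(\con_{F'/F}(A)\bigr) = \tfrac{m}{t}\,\deg_F(A).
\end{equation*}
This follows place by place from $\deg_{F'}(P')=\frac{1}{t}f(P'|P)\deg_F(P)$ (since the constant field extension has degree $t$) combined with the fundamental equality $\sum_{P'|P} e(P'|P)f(P'|P)=m$. Applying this to $A=G$ yields $\deg G'=\frac{m}{t}\deg G$. Now \eqref{d>} applied to the AG-code $\cod'$ over $F'$ gives $d'\ge n'-\deg G'=n'-\frac{m}{t}\deg G$, which is \eqref{d'}.

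For the dimension bound \eqref{k'}, I would use that, by hypothesis, $\cod'$ is a MAG-code, so $\deg G'<n'$ and \eqref{k>} applies in $F'$:
\begin{equation*}
k' \;\ge\; \deg G' + 1 - g' \;=\; \tfrac{m}{t}\deg G + 1 - g'.
\end{equation*}
Substituting the Hurwitz formula \eqref{g'} for $g'$ gives $1-g'=-\frac{m}{t}(g-1)-\frac{1}{2}\deg\mathrm{Diff}(F'/F)$, hence
\begin{equation*}
k' \;\ge\; \tfrac{m}{t}\bigl(\deg G - g + 1\bigr) - \tfrac{1}{2}\deg\mathrm{Diff}(F'/F) \;=\; \tfrac{m}{t} k^{*}-\tfrac{1}{2}\deg\mathrm{Diff}(F'/F),
\end{equation*}
which is exactly \eqref{k'}. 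None of the steps should pose real difficulty; the only point that requires a moment of care is the conorm degree formula, where one must not forget the factor $1/t$ coming from the constant field extension, since the codes $\cod$ and $\cod'$ live over different alphabets $\ff_q$ and $\ff_{q^t}$.
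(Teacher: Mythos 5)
Your proof is correct and follows essentially the same route as the paper: the length bound from $n'=\sum_i m_{P_i}$, the distance bound from \eqref{d>} together with $\deg G'=\tfrac{m}{t}\deg G$, and the dimension bound from \eqref{k>} combined with the Hurwitz formula \eqref{g'}. The only difference is that you derive the conorm degree formula from the fundamental equality, whereas the paper simply cites Corollary 3.1.14 of \cite{Sti}; this is a matter of detail, not of approach.
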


\begin{proof}
We see at once that \eqref{n'} holds because $n' = \# \sop (D')$ and, by definition of the conorm code, we have 
$$n' = \sum_{i=1}^n m_{P_i},$$ 
where $1\leq m_{P_i}\leq m$ for $i=1,\ldots,n$.  

We prove now the lower bounds \eqref{d'} and \eqref{k'}. 
By \eqref{d>} we have that  
$$d' \ge n'-\deg G'.$$ 
We see that \eqref{d'} holds because from Corollary 3.1.14 in \cite{Sti} we have 
\begin{equation} \label{degG'G}
\deg G' = \tfrac{m}{t}\deg G.
\end{equation}
Finally from \eqref{k>} we have that  if $\deg G' < n'$
then $k'\ge \deg G' +1-g'$. From this, and using \eqref{g'} and \eqref{degG'G}, we see that \eqref{k'} also holds. 
\end{proof}

With the same hypothesis of Proposition \ref{prop C'} we have the following
\begin{coro} \label{corito} 
Let $s$ (resp. $r$) be the number of places $P_i$ in $D=P_1+\cdots+P_n$ which split completely (resp. are totally ramified) in $F'$, and assume that $n=r+s$. Then
\begin{enumerate}[($a$)]
	\item the length $n'$ of the conorm code $\cod'=\con_{F'/F}(\cod)$ satisfy \begin{equation}\label{n'bis}
	n+s\le n'=ms+r \le mn-r,
	\end{equation} and equalities hold if and only if the extension $F'/F$ is quadratic ($m=2$). \sk 
	
	\item $n'=mn$ if and only if $s=n$ and $r=0$; and in this case, $d' \ge m(n-\tfrac{\deg G}{t})$. \sk 
	
	\item $n'=n$ if and only if $s=0$ and $r=n$; and, in this case, $d' \ge n-\tfrac{m}{t} \deg G = n-\deg G'$.
\end{enumerate}
\end{coro}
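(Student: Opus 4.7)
The plan is to reduce everything to a single computation of $n'$ and then invoke Proposition~\ref{prop C'}. By Definition~\ref{concod} the length of $\cod'$ is $n' = \sum_{i=1}^n m_{P_i}$, and the hypothesis $n = r+s$ says that each $P_i$ contributes either $m_{P_i}=m$ (if $P_i$ splits completely in $F'$) or $m_{P_i}=1$ (if $P_i$ is totally ramified in $F'$); hence
$$n' = ms + r.$$
Everything else in the corollary will follow from this identity, combined with the bound \eqref{d'} and the degree formula $\deg G' = \tfrac{m}{t}\deg G$ coming from \eqref{degG'G}.

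For part $(a)$, I would substitute $n = r+s$ to compute the two differences
$$n' - (n+s) = (m-2)s \qquad \text{and} \qquad (mn-r) - n' = (m-2)r,$$
both of which are nonnegative since $m \ge 2$, which establishes the chain \eqref{n'bis}. For the ``iff'' direction, the fact that $r+s = n \ge 1$ forces at least one of $r$, $s$ to be strictly positive, so requiring both $(m-2)s = 0$ and $(m-2)r = 0$ forces $m=2$; conversely, $m=2$ makes both differences vanish identically.

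Parts $(b)$ and $(c)$ then reduce to routine algebra on the identity $n' = ms + r$ with $n = r+s$. Setting $n' = mn$ yields $(m-1)r = 0$, hence $r=0$ and $s=n$; feeding this back into \eqref{d'} together with \eqref{degG'G} gives $d' \ge mn - \tfrac{m}{t}\deg G = m\bigl(n - \tfrac{\deg G}{t}\bigr)$. Dually, $n' = n$ yields $(m-1)s = 0$, hence $s=0$ and $r=n$, and \eqref{d'} together with \eqref{degG'G} delivers $d' \ge n - \tfrac{m}{t}\deg G = n - \deg G'$.

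I do not anticipate any real obstacle here: the argument is essentially arithmetic bookkeeping with ramification indices, since the substantive inequality $d' \ge n' - \deg G'$ has already been proved in Proposition~\ref{prop C'}. The only clause deserving a moment of care is the ``iff'' in $(a)$, where one must observe that $n \ge 1$ guarantees $r+s > 0$, so that the factor $(m-2)$ is actually being tested against a nonzero quantity.
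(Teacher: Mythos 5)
Your proposal is correct and follows essentially the same route as the paper's proof: everything is reduced to the identity $n' = ms + r$ together with $n = r+s$ and $m \ge 2$, with the distance bounds imported directly from \eqref{d'} and \eqref{degG'G}. Your version is somewhat more explicit than the paper's (which merely asserts part $(a)$ is ``straightforward to check''), and your observation that $r+s = n \ge 1$ is needed to force $m=2$ in the equality case is a correct and worthwhile detail.
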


\begin{proof}
($a$) It is straightforward to check that both inequalities in \eqref{n'} hold if and only if $m=2$. In this case, $n+s=2s+r=2n-r$. 

($b$) Since $r=n-s$, we have that $n'=mn$ if and only if
 $(m-1)s=(m-1)n$, which holds if and only if $s=n$ (and hence $r=0$), since $m>1$. 
 
($c$) Similarly, $n'=n$ if and only if $(m-1)s=0$, which in turn can only happen if $s=0$ since $m>1$. The assertions on the distance are clear now from \eqref{d'}.
\end{proof}

\subsubsection*{Quadratic extensions}
Suppose $F'/\ff_{q^t}$ is a quadratic extension of $F/\ff_q$. Since $t\mid m$ and $m=2$, then $t=1$ or $t=2$. If $t=1$ then $F'/F$ is a geometric extension. This case will be studied in the next paragraph. Thus, assume that $t=2$. 
In this case we have that $F'$ is a constant field extension of $F$ so that $F'/F$ is an unramified  extension (\cite[Thm. 3.6.3]{Sti}). Then
$$n'=2s=2n.$$ 
We know that $d'\ge n'-\deg G'$ and $d\ge n-\deg G$. Thus the bound for $d'$ can be improved since by \eqref{d'} we have 
$$d' \ge 2s-\deg G = n'-\deg G,$$
or in other terms 
$$d' \ge (n-\deg G) +s.$$
Regarding the dimension, 
we have $\deg(\mathrm{Diff}(F'/F))=0$, since the extension is not ramified, and hence we get 
$$k'\ge k^*.$$
So, in general, for conorm codes over non-geometric quadratic extensions the minimum distance and the dimension may increase.

\subsubsection*{Geometric extensions and levels}
We  consider now the particular case of geometric extensions, that is finite extensions $F'/F$ of algebraic function fields over the same field of constants $\ff_q$. Notice that in this case, the bounds for the parameters in Proposition \ref{prop C'} and Corollary \ref{corito} hold with $t=1$.
Recall that the secondary parameters of an $[n,k,d]$-code are the information rate $R=k/n$ and the relative minimum distance 
$\delta = d/n$.

\begin{coro} \label{corito2}
Let $F'/F$ be a geometric extension of function fields over $\ff_q$ of degree $m>1$. Let $\cod'= \con(\cod)$ as in Corollary \ref{corito} with $n=r+s$. The following holds:
\begin{enumerate}[$(a)$]
\item If $\cod'$ is a MAG-code, then $\cod$ is a MAG-code.  
If $r=0$, then the converse also holds and $d'\ge 2$. If further 
$d=n-\deg G$, then $d'\ge md$ and $\delta'\ge \delta$. \msk

\item If either $\cod'$ is a MAG-code and $2g-2<\deg G$, or else $\cod$ is a SAG-code and $r=0$, then 
$k' \ge mk - \tfrac 12 \deg \mathrm{Diff} (F'/F)$.
\end{enumerate}
\end{coro}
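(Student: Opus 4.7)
The plan is to reduce everything to two scaling identities that hold because the extension is geometric, namely $\deg G' = m \deg G$ (from \eqref{degG'G} with $t=1$) and $n' = m s + r$ with $n = r + s$ (from Corollary~\ref{corito}(a)). The comparisons between $\deg G$ and $n$, versus $\deg G'$ and $n'$, then become elementary linear inequalities, and the dimension bound in part~(b) is a direct substitution into \eqref{k'}.

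For part~(a), the first step is to prove ``$\cod'$ MAG $\Rightarrow$ $\cod$ MAG''. Assuming $\deg G' < n'$, the identities above give $m \deg G < ms + r$; since $m \ge 2$ and $r \ge 0$ yield $r \le mr$, the right-hand side is bounded by $m(s+r) = mn$, so $\deg G < n$. For the converse when $r = 0$, I have $s = n$ and $n' = mn$, hence $\deg G < n$ is equivalent to $\deg G' = m \deg G < mn = n'$. For the lower bound on the distance, I would apply \eqref{d>} to $\cod'$ to obtain $d' \ge n' - \deg G' = m(n - \deg G)$, which is at least $m \ge 2$ since MAG forces $n - \deg G \ge 1$. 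If further $d = n - \deg G$, the same chain gives $d' \ge md$, and dividing by $n' = mn$ yields $\delta' \ge \delta$.

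For part~(b), the main tool is the lower bound \eqref{k'} from Proposition~\ref{prop C'}, which with $t = 1$ reads $k' \ge m k^* - \tfrac 12 \deg \mathrm{Diff}(F'/F)$, valid whenever $\cod'$ is MAG. The identity $k = k^* = \deg G + 1 - g$ noted after \eqref{k>} holds as soon as both $\deg G < n$ and $2g - 2 < \deg G$. In Case~1, $\cod'$ MAG lets me apply \eqref{k'} directly, and part~(a) promotes it to $\cod$ MAG, so together with $2g - 2 < \deg G$ the substitution $k = k^*$ is justified. In Case~2, $\cod$ SAG already encodes both $\deg G < n$ and $2g-2 < \deg G$, while $r = 0$ together with the converse in part~(a) ensures $\cod'$ is MAG, so Case~2 reduces to Case~1.

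I do not foresee any genuine obstacle here: the argument is essentially a bookkeeping chase in which the geometric hypothesis $t = 1$ makes $\deg G'$ and $n'$ scale linearly in $m$, up to a correction coming from the splitting data $r$ and $s$, and Proposition~\ref{prop C'} delivers the dimension estimate for free. The only point worth flagging is why the converse in part~(a) needs $r = 0$: without that hypothesis, the inequality $m \deg G < ms + r$ genuinely does not follow from $\deg G < r + s$, so $r = 0$ is precisely what forces $n'$ to reach its maximum $mn$.
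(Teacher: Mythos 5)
Your proposal is correct and follows essentially the same route as the paper's proof: both parts reduce to the scaling identities $\deg G' = m\deg G$ and $n' = ms + r$, with \eqref{d>} giving the distance bounds in $(a)$ and \eqref{k'} combined with $k = k^* = \deg G + 1 - g$ (valid for SAG-codes) giving the dimension bound in $(b)$, and the two cases of $(b)$ collapsing into one via part $(a)$. Your closing remark on why $r=0$ is needed for the converse is a nice addition but does not change the substance.
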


\begin{proof}
$(a)$ Since $\cod'$ is a MAG-code, 
$$\deg G'<n'=ms+r.$$ 
Also, $\deg G' = m\deg G$ and $n=s+r$, thus 
$$\deg G<s+\tfrac rm<n.$$ 
If $r=0$, then $n'=nm$ and hence $\deg G<n$ implies that $\deg G' <n'$. In this case, $d'\ge m(n-\deg G) >m\ge 2$.
If in addition $d=n-\deg G$ then $d'\ge md$ and hence 
$$\delta' = \frac{d'}{n'} \ge\frac{md}{mn} = \frac{d}{n} = \delta.$$

$(b)$ Since $\cod'$ is a MAG-code we have \eqref{k'}. By $(b)$, $\cod$ is also a MAG-code and since $2g-2<\deg G$ by assumption, $\cod$ is a SAG-code. This implies $k=\deg G+1-g$ and hence, by \eqref{k'}, we have
$$k'\ge mk-\tfrac 12 \deg \mathrm{diff}(F'/F).$$ 
Now, if $\cod$ is a SAG-code (in particular a MAG-code), then the hypothesis $r=0$ implies, by $(a)$, that $\cod'$ is a MAG-code, and hence we are in the previous case.
\end{proof}

\subsubsection*{Examples in quadratic geometric extensions}
We now give examples of conorm codes in quadratic geometric extensions of some rational function field. 

\begin{exam}
Consider $F=\ff_4(x)$ the rational function field over $\ff_4=\{0,1,\alpha, \alpha^2\}$, where $\alpha^2+\alpha+1=0$. We define the SAG-code $\cod=C_\rr(D,G)$ with $$D=P_1+P_\alpha+P_{\alpha^2}\qquad \text{and}\qquad G=2P_\infty$$ where $P_1$, $P_\alpha$ and $P_{\alpha^2}$ are the rational places which are simple zeroes of $x+1$, $x+\alpha$ and $x+\alpha^2$ respectively, and $P_\infty$ is the rational place that is a simple pole of $x$ in $F$. 
	We have that $\cod$ is a SAG-code over $\ff_4$ with parameters $[3,3,1]$, hence MDS (maximum distance separable). In fact, since $\deg G=2$ and $g(F)=0$, we have that $\cod$ is a SAG-code. Also, $k=2+1-0=3$ and $d\geq 3-2=1$ but, by Singleton bound, we know that $d\leq 1$. 
	
	Let us now consider $F'=F(y)=\ff_4(x,y)$ where 
	$$y^2+y=\frac{x^2}{x+1}.$$ 
	This extension $F'/F$ is the first step of a famous tower of function fields given by Garcia and Stichtenoth 
	in \cite{GS96}. Since $F'/F$ is an Artin-Schreier extension, we have that $P_1$ and $P_\infty$ are totally ramified in $F'$ while $P_\alpha$ and $P_{\alpha^2}$ split completely in $F'$. 
	Moreover, we have 
	$$[F':F]=2 \qquad \text{and} \qquad g(F')=1.$$ 
	In this case $\cod'=\con_{F'/F}(\cod)=C_\rr(D',G')$ is also a SAG-code over $\ff_4$ with $$D'=Q_1+R_\alpha+S_\alpha+R_{\alpha^2}+S_{\alpha^2} \qquad \text{and}\qquad G'=4Q_\infty,$$ where $Q_1$ (resp. $Q_\infty$) is the only place over $P_1$ (resp. $P_\infty$) and $R_\alpha$ and $S_{\alpha}$ (resp. $R_{\alpha^2}$ and $S_{\alpha^2}$) are the two places over $P_\alpha$ (resp. $P_{\alpha^2}$). We have now that $n'=5$, $k'=4$ and $d'\geq 1$. Thus, $\cod'$ is a $[5,4,d']$-code over $\ff_4$ with $1\leq d'\leq 2$. 
	In fact $d'=1$ because if $z=(x-\alpha)(x-\alpha^2)$, the principal divisor $(z)^{F'}$ of $z$ in $F'$ is
	\[(z)^{F'}=R_{\alpha}+S_{\alpha}+R_{\alpha^2}+S_{\alpha^2}-4Q_{\infty}.\]
This implies that $z\in \rr(G')$ and also that
\[z(Q_1)\neq 0\quad\text{and}\quad z(R_{\alpha})=z(S_{\alpha}) =z(R_{\alpha^2})=z(S_{\alpha^2})=0.\]
Thus there is a codeword in $\cod'$ of weight $1$.
\end{exam}

\begin{exam}
Let $F=\ff_q(x)$ be a rational function field and consider the quadratic extension $F'/\ff_q$ of $F/\ff_q$ determined by the elliptic function field $F'=\ff_q(x,y)$
given by 
$$y^2 =f(x)$$ 
where $f(x) \in \ff_q[x]$ is square-free of degree 3.  
We will fix a rational AG-code $\cod=C_\rr^F(D,G)$ and we will consider the elliptic conorm code $\cod' = \con_{F'/F}(\cod)$. 

Let $R_1, \ldots, R_q$ and $P_\infty$ be the rational places of $F$ and let $P_1, \ldots, P_r$ be the places corresponding to the irreducible monic polynomials $p_i(x)$ in the factorization of $f(x)$, hence $1\leq r \leq 3$, and put 
$$D_1=R_1+\cdots +R_q \quad \text{and} \quad D_2=P_1+\cdots+P_r.$$
There are various possibilities for $\cod$, let us see three of them. \sk

\noindent ($i$) 
Assume that $\ff_q$ has odd characteristic and consider
$$\cod_1 = C(D_1, \ell P_\infty)$$ 
for $\ell \in \na$. Then, $P_1,\ldots,P_r$ and $P_\infty$ are the only ramified places of $F'$. Even more, the places $P_1,\ldots,P_r$ and $P_\infty$ are totally ramified in $F'$. If $Q_1,\ldots,Q_r$ and $Q_\infty$ denote the corresponding places of $F'$ over them, then $\deg Q_j=\deg P_j$ and $\deg Q_\infty =1$.
Thus, the conorm codes $\con(\cod_1)$ has parameters $[n_1',k_1',d_1']$ where 
$$n_1'=\left\{
\begin{array}{cl}
  2q  & \qquad \text{if $f$ is irreducible, } (r=1),     \\[1.5mm]
  2q-1& \qquad \text{if $f$ has only one linear factor, } (r=2),     \\[1.5mm]
  2q-3& \qquad \text{if $f$ has three linear factors, } (r=3). 
\end{array}
\right.$$
By \eqref{d'} and \eqref{k'} in Proposition \ref{prop C'}, we have  
$$d_1' \ge n_1'-2\ell$$ and, 
since $\mathrm{Diff} (F'/F) = Q_1+\cdots +Q_r+Q_\infty$ we have that $\deg \mathrm{Diff} (F'/F) =4$ and hence,  
$$k_1' \ge 2(k_1^*-1)=2\deg G =2\ell.$$ 
Thus, by the above expressions and the Singleton bound, we have that 
$$n_1' \le d_1'+k_1' \le n_1'+1.$$
This is in coincidence with the known fact that elliptic codes are almost MDS, that is they are MDS, or the Singleton bound fails by one.  \sk

\noindent
($ii$) Another possibility is to take 
$$\cod_2 = C(D_1+P_\infty, D_2) \qquad \text{or} \qquad 
\cod_3 = C(D_1, D_2 + \ell P_\infty)$$
for $\ell \ge 1$, with parameters $[q+1,k_2,d_2]$ and $[q,k_3,d_3]$, respectively. Here, to ensure that the supports of the divisors $D$ and $G$ are disjoint we have to assume  that $f$ is irreducible over $\ff_q$.  
The associated elliptic conorm codes $\cod_2'=\con_{F'/F}(\cod_2)$ and $\cod_3'=\con_{F'/F}(\cod_3)$
have parameters $[2q+1,k_2',d_2']$ and $[2q,k_3',d_3']$, respectively.
Similarly as in ($i$), one can obtain bounds for the dimension and minimum distance of these codes.
\end{exam}

\section{Unramified extensions and duality}
We consider now unramified extensions $F'/F$ of function fields over $\ff_q$. Under this assumption, it is possible to get some nice results for duality of conorm codes.

We begin by studying the relation between the AG-levels of an AG-code and its conorm code.

\begin{prop} \label{coro unram}
Let $F'/F$ be an unramified extension of algebraic function fields over $\ff_q$ of degree $m$. Then $\cod'$ is a SAG-code (resp.\@ MAG) if and only if $\cod$ is a SAG-code (resp.\@ MAG). In this case, $k'=mk$ and $R'=R$. If in addition $d=n-\deg G$, 
then $d'\ge md$ and $\delta'\ge \delta$.\end{prop}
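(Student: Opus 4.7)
The plan is to extract two concrete consequences from the unramified hypothesis and then reduce the statement to arithmetic using the formulas for $n'$, $g'$, and $\deg G'$ already established in Section~\ref{sec2}. First I would note that unramifiedness gives $e(P'|P)=1$ for every $P'|P$, so the standing condition \eqref{condicion} collapses to $m_{P_i}=m$ for each $P_i\in\sop(D)$; every such $P_i$ therefore splits completely in $F'$. In the notation of Corollary~\ref{corito} we are in the case $s=n$, $r=0$, so $n'=mn$. Simultaneously $\mathrm{Diff}(F'/F)=0$, whence the Hurwitz formula \eqref{g'} (with $t=1$) becomes $g'=m(g-1)+1$, equivalently $2g'-2=m(2g-2)$.

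The equivalence of AG-levels is now immediate. Using $\deg G'=m\deg G$ from \eqref{degG'G} and $n'=mn$ we get $\deg G'<n' \Leftrightarrow \deg G<n$, which is the MAG equivalence; combining this with $2g'-2=m(2g-2)$ yields $2g'-2<\deg G' \Leftrightarrow 2g-2<\deg G$, which is the SAG equivalence.

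For the dimension and information rate, in the SAG case both $k$ and $k'$ are computed exactly by Riemann-Roch, so
\[k'=\deg G'+1-g'=m\deg G+1-m(g-1)-1=m(\deg G+1-g)=mk,\]
and therefore $R'=k'/n'=mk/(mn)=R$. Finally, assuming $d=n-\deg G$, the AG-code bound \eqref{d>} applied to $\cod'$ yields $d'\geq n'-\deg G'=m(n-\deg G)=md$, whence $\delta'\geq\delta$.

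I do not foresee any serious obstacle; the entire content of the proof is the translation of the unramified hypothesis into complete splitting of each $P_i$ together with the vanishing of $\mathrm{Diff}(F'/F)$, after which every assertion follows by bookkeeping with the formulas already in hand. The only place where one must be slightly careful is to confirm that unramifiedness is compatible with condition \eqref{condicion}; this is exactly why the splitting step must come first.
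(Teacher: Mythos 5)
Your proposal is correct and follows essentially the same route as the paper: unramifiedness forces complete splitting of each $P_i$ (so $n'=mn$) and $\mathrm{Diff}(F'/F)=0$ (so $2g'-2=m(2g-2)$), after which the level equivalences, $k'=mk$ via Riemann--Roch in the SAG case, and the distance bound all follow from $\deg G'=m\deg G$ exactly as in the paper's argument. The only cosmetic difference is that you verify the MAG equivalence directly rather than citing part $(b)$ of Corollary~\ref{corito}.
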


\begin{proof}
Recall that $F'/F$ is unramified if and only if $\mathrm{Diff} (F'/F) =0$. Hence, 
$$g'-1=m(g-1), $$ 
by \eqref{g'}.
Also, $r=0$ and $n'=mn$. By $(b)$ of Corollary \ref{corito}, $\cod'$ is a MAG-code if and only if $\cod$ is a MAG-code.
Since 
$$2m(g-1)=2(g'-1)<\deg G'=m\deg G ,$$ 
we see that $\cod'$ is a SAG-code if and only if $\cod$ is a SAG-code.
In this situation, we have both $k'=\deg G'+1-g'$ and $k=\deg G + 1-g$. Putting together all these information we have
$$k'= m \deg G+ m(1-g) = 
m(\deg G+1-g)=mk,$$ 
and then $R' = k' / n' = mk / mn = k/n= R$.

For the remaining assertion, by \eqref{d'} we have
$$d' \ge m(n-\tfrac{\deg G}{t}) \ge m(n-\deg G) = md,$$
from which we have $\delta' = d'/n' \ge md/mn = \delta$, as we wanted to show.
\end{proof}

\begin{exam}
Let $F_0=K(x_0)$ be the rational function field over $K=\ff_{4^3}$ and consider the finite tower 
$$F_0\subset F_1 \subset F_2 \subset F_3$$
of functions fields over $K$, where each field extension is a Kummer extension of degree 3 recursively defined for $i=1,2,3$ by $F_i=F_{i-1}(x_i)$ where 
$$ x_i^3 = 1 + \frac{x_{i-1}^3}{(x_{i-1}-1)^3}.$$ 

Let us denote by $P_\beta$ the simple zero of {\blue $x_0-\beta$, for $\beta \in \ff_4=\{0,1,\alpha,\alpha^2\}$}, and by $P_\infty$ the simple pole of $x_0$ in $F_0$. In \cite{Wu04}, Wulftange proved that $P_0$ splits completely in $F_3/F_0$, $P_1$ splits completely in $F_1/F_0$, is totally ramified in $F_2/F_1$ and then splits completely again in $F_3/F_2$. Furthermore $P_{\alpha}$, $P_{\alpha^2}$ and $P_\infty$ are totally ramified in $F_1/F_0$ and then they split completely in $F_3/F_1$. The ramification behavior of the other rational places of $F_0$ was studied in \cite{Wu04}, where the author also proved that the extension $F_2/F_1$ is ramified but the extension  $F_3/F_2$ is unramified.

Using the above description of the the ramification behavior and the Hurwitz genus formula, we have that $g(F_2)=4$. 
We also have that  there are exactly nine places $Q_1,\ldots,Q_9$ of $F_2$ lying above $P_0$, three places $Q_{10}, Q_{11}$ and $Q_{12}$ of $F_2$ lying above $P_1$  and three  places $R_1, R_2$ and $R_3$ lying above $P_\infty$. All of them are rational places of $F_2$. We define $\cod=C_\rr(D,G)$ with 
\[D=Q_1+\cdots+Q_{12} \quad \text{and} \quad G= 3R_1+3R_2+3R_3.\] 
Since
\[2g(F_2)-2=6< 9=\deg G < 12=n,\]
we have that $\cod$ is a $[12,6,d]$ SAG-code with $d\geq 3$. In fact $d=3$ because the principal divisor $(x_0)^{F_2}$ of $x_0$ in $F_2$ is
\[(x_0)^{F_2}=Q_1+\cdots+Q_9-G,\]
so that $x_0\in \rr(G)$ and also
\[x_0(Q_1)=\cdots=x_0(Q_9)=0\quad\text{and}\quad x_0(Q_{10})\neq 0, x_0(Q_{11})\neq 0, x_0(Q_{12})\neq 0.\]
This implies that there is a codeword of $\cod$ of weight $3$. Now, since $F_3/F_2$ is unramified, we see from Proposition \ref{coro unram} that the conorm code $\cod'$ of $\cod$ is also a SAG-code with $n'=36$, $k'=18$ and $d'\geq 9$. In fact $d'=9$  because the principal divisor $(x_0)^{F_3}$ of $x_0$ in $F_3$ is
\[(x_0)^{F_3}=Q'_1+\cdots+Q'_{27}+Q'_{28}+\cdots+Q'_{36}-G',\]
where $Q'_1,\cdots,Q'_{27}$ are all the places of $F_3$ lying over $P_0$,  $Q'_{28},\cdots,Q'_{36}$ are all the places of $F_3$ lying over $P_1$ and $G'=\con_{F_3/F_2}G$. Thus $x_0\in \rr(G')$ and we also have that
\[x_0(Q'_1)=\cdots=x_0(Q'_{36})=0\quad\text{and}\quad x_0(S)\neq 0,\]
for any $S\in \{Q'_{28},\ldots, Q'_{36}\}$. This implies that there is a codeword of $\cod'$ of weight $9$. In particular we see that the lower bound for $d'$ given in Proposition \ref{coro unram} can not be improved in general.
\end{exam}

\subsubsection*{Duality}
In general, the dual of the conorm code is not the conorm of the dual code. However, this is indeed the case for conorm codes defined over unramified extensions with an additional condition. More precisely

\begin{thm} \label{prop dual}
Let $F'/F$ be an unramified finite extension of algebraic function fields of degree $m$ over $\ff_q$ such that $\gcd(m,q)=1$ and let $\cod=C_\rr(D,G)$. Then  
\begin{equation} \label{con dual}
\con(\cod^\perp) = \con(\cod)^\perp.
\end{equation}
\end{thm}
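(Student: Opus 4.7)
The approach is to realize both sides of \eqref{con dual} as AG-codes of the form $C_\rr^{F'}(D',\, D'-G'+W^*)$ and then match the canonical divisors $W^*$ via the cotrace of a well-chosen Weil differential of $F$.

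First I would invoke the differential description of the dual (\cite[Thm.\ 2.2.8]{Sti}): there exists a Weil differential $\eta$ of $F$ with $v_{P_i}(\eta) \ge -1$ and $\mathrm{res}_{P_i}(\eta) = 1$ for every $i = 1, \ldots, n$, and with $W := (\eta)_F$ one has $\cod^\perp = C_\rr^F(D,\, D - G + W)$. Since $F'/F$ is unramified, condition \eqref{condicion} forces $m_{P_i} = m$, so every $P_i$ splits completely into rational places of $F'$; in particular $\con(D) = D'$. Applying the $\z$-linear map $\con_{F'/F}$ then yields
$$\con(\cod^\perp) \;=\; C_\rr^{F'}\bigl(D',\; D' - G' + \con(W)\bigr).$$
By the same duality theorem applied inside $F'$, in order to identify this with $\con(\cod)^\perp = C_\rr^{F'}(D',\, D' - G' + (\eta')_{F'})$ it suffices to exhibit a Weil differential $\eta'$ of $F'$ with $(\eta')_{F'} = \con(W)$ and, for every $P' \mid P_i$, satisfying $v_{P'}(\eta') \ge -1$ and $\mathrm{res}_{P'}(\eta') = 1$.

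The natural candidate is $\eta' := \cot_{F'/F}(\eta)$. The hypothesis $\gcd(m,q) = 1$ is equivalent to $p \nmid m$ (for $q = p^r$), which forces $F'/F$ to be separable, so the cotrace map $\cot_{F'/F} \colon \Omega_F \to \Omega_{F'}$ is well-defined (\cite[\S4.3]{Sti}). The divisor identity $(\cot(\eta))_{F'} = \con((\eta)_F) + \mathrm{Diff}(F'/F)$ combined with $\mathrm{Diff}(F'/F) = 0$ gives $(\eta')_{F'} = \con(W)$, and the unramified hypothesis yields $v_{P'}(\eta') = e(P'|P_i)\, v_{P_i}(\eta) + d(P'|P_i) = v_{P_i}(\eta) \ge -1$ at each $P' \mid P_i$. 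For the residue, the defining property of the cotrace reads
$$\mathrm{res}_{P'}\bigl(\cot(\eta)\bigr) \;=\; [F'_{P'} : F_{P_i}] \cdot \mathrm{res}_{P_i}(\eta),$$
and complete splitting of $P_i$ into rational places forces $[F'_{P'} : F_{P_i}] = e(P'|P_i)\, f(P'|P_i) = 1$, so $\mathrm{res}_{P'}(\eta') = \mathrm{res}_{P_i}(\eta) = 1$. This produces the required $\eta'$ and proves \eqref{con dual}.

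The main obstacle I foresee is the fine-grained check of the cotrace residue at each $P' \mid P_i$; this is precisely where all three hypotheses of the theorem converge. The unramified assumption kills $\mathrm{Diff}(F'/F)$ (matching valuations and making the canonical divisors compatible under conorm), condition \eqref{condicion} trivializes the local completions (so that $F'_{P'} = F_{P_i}$ and the residue normalization transfers intact), and $\gcd(m,q) = 1$ secures the separability of $F'/F$, without which $\cot_{F'/F}(\eta)$ would not be defined in the first place. If any of these fails, one should expect the identification of $\eta' = \cot(\eta)$ with a generator of the canonical class of $\con(\cod)^\perp$ to break down, as illustrated by the counterexample the authors mention in general (Example \ref{counterexample}).
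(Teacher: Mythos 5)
Your proposal is correct and, at the level of strategy, is the same as the paper's: represent $\cod^\perp=C_\rr(D,\,D-G+(\eta))$ via a Weil differential $\eta$ normalized at the $P_i$ (Stichtenoth 2.2.9--2.2.10), lift everything by the conorm, and identify the resulting canonical divisor with that of $\eta'=\cot_{F'/F}(\eta)$ through $(\eta')=\con((\eta))+\mathrm{Diff}(F'/F)=\con((\eta))$. The one point of genuine divergence is how the normalization $\eta'_{P'}(1)=1$ is verified, which is exactly where the hypothesis $\gcd(m,q)=1$ enters the paper's argument: the authors rescale, setting $\tilde\eta=\bar m^{-1}\eta'$ and computing with the global trace, for which $\tr_{F'/F}(\bar m^{-1})=m\bar m^{-1}=1$; you instead evaluate the local component through the local trace $\tr_{F'_{P'}/F_{P_i}}$ and observe that complete splitting of $P_i$ into rational places forces the local degree $[F'_{P'}:F_{P_i}]=e(P'|P_i)f(P'|P_i)$ to be $1$, so the residue of $\cot_{F'/F}(\eta)$ at $P'$ is already $1$ and no rescaling is needed. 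Your local computation is legitimate and in fact more precise, since the defining property of the cotrace acts on adeles componentwise via the local traces; the price is that in your reading the coprimality hypothesis is left only with the job of guaranteeing separability of $F'/F$ (so that the cotrace exists), and that is arguably already implied by the unramified hypothesis, since a purely inseparable step of function fields is totally ramified everywhere. So the two arguments coincide except for where they let $\gcd(m,q)=1$ do its work; your version of the residue check is sound and establishes \eqref{con dual}.
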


\begin{proof}
Let $\cod=C_\rr(D,G)$ and $\con(\cod)=C_\rr(D',G')$ with $D'=\con(D)$ and $G'=\con(G)$. On the one hand, from Definition 2.2.6 and Theorem 2.2.8 in \cite{Sti}, we have that $\con(\cod)^\perp=C_{\Omega}(D',G')$. 

On the other hand, we know that $\cod^\perp=C_{\Omega}(D,G)$ and by Lemma 2.2.9 and Proposition 2.2.10 in \cite{Sti} there exist a Weil differential $\eta$ of $F$ such that 
$$\cod^\perp=C_{\Omega}(D,G)=C_\rr(D,H) \quad \text{with} \quad H=D-G+(\eta)$$ 
and also $v_{P_i}(\eta)=-1$ and $\eta_{P_i}(1)=1$ for all $i=1,\ldots,n$ where $\{P_1, \ldots, P_n\} = \sop(D)$. 

Then $$\con(\cod^\perp)=C_\rr(D', H') \quad \text{with} \quad H'=\con(H)=D'-G'+\con((\eta)).$$

Let  $\eta'=\cot_{F'/F}(\eta)$ be the cotrace of $\eta$, that is $\eta'$ is a Weil differential of $F'$ such that 
(see \cite{Sti}, Theorem 3.4.6) 
$$(\eta')=(\cot_{F'/F}(\eta))=\con_{F'/F}((\eta))+\mathrm{Diff}(F'/F),$$
and since in this case the extension is unramified we have 
$$(\eta')=\con((\eta)).$$ 

Moreover, if $\sop(D')=\{Q_1,\ldots, Q_{n'}\}$ then for each $j$ there is an index $i$ such that $$Q_j \cap F=P_i\in \sop(D),$$ and since $P_i \not\in\sop(H)$, because $v_{P_i}(\eta)=-1$, then $Q_j \not\in \sop(H')$. Thus, we have  
\begin{eqnarray*}
0 &=& v_{Q_j}(H') = v_{Q_j}(D'-G'+(\eta')) \\ 
&=& v_{Q_j}(D')-v_{Q_j}(G')+v_{Q_j}(\eta')=1-0+v_{Q_j}(\eta').
\end{eqnarray*}

Since $m=[F':F]$ is coprime with $q$, we can consider $\bar{m}\in \ff_q^{\ast}$ and its inverse ${\bar{m}}^{-1}$ in the multiplicative group $\ff_q^\ast$ and define $\tilde{\eta}=\bar{m}^{-1}\,\eta'$. 
Therefore, $v_{Q_j}(\tilde{\eta})=v_{Q_j}(\eta')=-1$ for each $j=1, \ldots, n'$.
Moreover, we also have  
$$\tilde{\eta}(1) = \bar{m}^{-1} \eta'(1) = \eta' (\bar{m}^{-1}) = \tr_{\ff_q/\ff_q}(\eta' (\bar{m}^{-1})) = \eta(\tr_{F'/F} (\bar{m}^{-1})) = \eta(1)=1.$$
Then, by using Proposition 2.2.10 in \cite{Sti} again, we have 
$$C_{\Omega}(D',G')=C_\rr(D',D'-G'+(\tilde{\eta})).$$
Finally, putting all these things together we have that 
\begin{eqnarray*}
\con(\cod^\perp) & = &  C_\rr(D',H') = C_\rr(D',D'-G'+(\tilde{\eta})) \\ 
&=& C_{\Omega}(D',G') = (C_\rr(D',G'))^\perp  =(\con(\cod))^\perp
\end{eqnarray*}
as we wanted to show.
\end{proof}

\begin{rem}
Theorem \ref{prop dual} probably holds not only for unramified extensions. In general we have that 
$$\con(\cod)^\perp = C(D',D'-G'+(\eta')) \quad \text{and} \quad \con(\cod^\perp) = C(D',D'-G'+ \con(\eta)),$$ 
where $\eta$ (resp. $\eta'$) is a Weil differential of $F$ (resp. $F'$). 
Thus, by the results of Munuera and Pellikaan in \cite{MP}, the problem in proving that $\con(\cod)^\perp = \con(\cod^\perp)$ reduces to determine whether or not the canonical divisors $(\eta')$ and $\con(\eta)$ are equal or rational equivalent.
\end{rem}

\section{The conorm of cyclic AG-codes} \label{sec3}
Here we assume, as we did in Section 3, that all the extensions of function fields considered are geometric. By using Galois extensions we 
study the conorm codes of cyclic AG-codes.
We will show that under certain conditions on the ramification behavior of the rational places of $D$, 
we can represent a cyclic AG-code $\cod=C_\rr^F(D,G)$ defined over $F$ as a cyclic AG-code defined over $F'$ by using the conorm. 

\sk 
First we will need some auxiliary results. {Let $F'/F$ be a function field extension and let $G'\in \divisor(F')$. The set of all places of $F$ lying below the places in $\sop(G')$ will be denoted as $\sop(G') \cap F$. In other words
$$\sop(G') \cap F=\{Q'\cap F: Q' \in \sop(G')\}.$$}  
 
\begin{lem} \label{lema1conorm} 
Let $F'/F$ be a extension of algebraic function fields over $\ff_q$.
Assume that $G \in\divisor(F)$ and $G' \in \divisor(F')$ are such that
$\sop(G') \cap F = \sop(G)$. If $v_{Q'}(G') \geq e(Q'|Q)\, v_Q(G)$ for every $Q' \in \sop(G')$ and $Q=Q' \cap F$, then  
$$\rr(G) \subseteq \rr(G').$$
\end{lem}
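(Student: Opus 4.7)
The plan is to verify the inclusion $\rr(G)\subseteq \rr(G')$ by a pointwise check of valuations in $F'$. I would start from a nonzero $x\in \rr(G)$, which by definition means $v_Q(x)\ge -v_Q(G)$ for every $Q\in \pl(F)$, and reduce the target condition $x\in \rr(G')$ to showing $v_{Q'}(x)\ge -v_{Q'}(G')$ for every $Q'\in \pl(F')$. The essential bridge is the extension-of-valuation identity $v_{Q'}(x)=e(Q'|Q)\,v_Q(x)$, available because $x\in F\subseteq F'$ and $Q=Q'\cap F$.

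I would then split the verification according to whether $Q'$ lies in $\sop(G')$ or not. If $Q'\in \sop(G')$, the support hypothesis forces $Q\in \sop(G)$, so the valuation hypothesis is applicable; multiplying $v_Q(x)\ge -v_Q(G)$ by the positive integer $e(Q'|Q)$ and combining with $v_{Q'}(G')\ge e(Q'|Q)\,v_Q(G)$ yields
\[
v_{Q'}(x)\;=\;e(Q'|Q)\,v_Q(x)\;\ge\;-\,e(Q'|Q)\,v_Q(G)\;\ge\;-\,v_{Q'}(G'),
\]
which is exactly what is needed. If $Q'\notin \sop(G')$, then $v_{Q'}(G')=0$, and I would use the support condition to rule out $Q\in \sop(G)$, so that $v_Q(G)=0$ and hence $v_{Q'}(x)=e(Q'|Q)\,v_Q(x)\ge 0=-v_{Q'}(G')$.

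The main delicate point is this second case: one needs to know that every place of $F'$ sitting above $\sop(G)$ actually belongs to $\sop(G')$, a compatibility that is automatic in the intended application of the lemma, where $G'$ is the conorm $\con_{F'/F}(G)$ (then $\sop(G')$ consists precisely of all places of $F'$ above $\sop(G)$). With this point settled, combining both cases yields $v_{Q'}(x)+v_{Q'}(G')\ge 0$ for every $Q'\in \pl(F')$, i.e. $x\in \rr(G')$, completing the proof.
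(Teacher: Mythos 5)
Your computation at places $Q'\in\sop(G')$ is exactly the paper's proof: write $v_{Q'}(x)=e(Q'|Q)\,v_Q(x)$, use $v_Q(x)\ge -v_Q(G)$, and then apply the hypothesis $v_{Q'}(G')\ge e(Q'|Q)\,v_Q(G)$. Where you differ is that you also verify the condition at places $Q'\notin\sop(G')$, a case the paper's proof silently omits --- and the ``delicate point'' you flag there is a genuine issue with the lemma as stated, not with your argument. The hypothesis $\sop(G')\cap F=\sop(G)$ only says that the set of places lying \emph{below} $\sop(G')$ equals $\sop(G)$; it does not force every place of $F'$ lying \emph{above} $\sop(G)$ to belong to $\sop(G')$. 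If some $Q\in\sop(G)$ with $v_Q(G)>0$ has a place $Q'|Q$ outside $\sop(G')$, the inclusion can fail: take $F=\ff_q(x)$, a geometric quadratic extension $F'$ in which a rational place $P$ splits into $P_1'$ and $P_2'$, and set $G=P$, $G'=P_1'$. All hypotheses of the lemma hold, yet any $x\in\rr(P)$ with a pole at $P$ (such $x$ exists since $\ell(P)=2$) satisfies $v_{P_2'}(x)=-1<0=-v_{P_2'}(G')$, so $x\notin\rr(G')$. As you observe, this obstruction disappears when $G'=\con_{F'/F}(G)$ --- equivalently, when the valuation inequality is imposed at \emph{every} place above $\sop(G)$ rather than only at places of $\sop(G')$ --- and that is the only situation in which the lemma is invoked later (Lemma \ref{lema2conorm}, Lemma \ref{lemin subcod}). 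In short, your proof is the more careful one; the paper's proof, and strictly speaking the statement itself, require exactly the repair you describe.
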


\begin{proof}
Let $Q' \in \sop(G')$, put $Q=Q' \cap F$ and take $x \in \rr(G)$. By hypothesis $Q \in \sop(G)$ and therefore $v_Q(x) \geq -v_Q(G)$. Then we have, 
\begin{eqnarray*}
v_{Q'}(x) &=& e(Q'|Q) \, v_Q(x) \geq  - e(Q'|Q) \, v_Q(G) \\
&\geq & - e(Q'|Q) \big( \tfrac{1}{e(Q'|Q)} v_{Q'}(G') \big) = -v_{Q'}(G'),
\end{eqnarray*}
and thus $x \in \rr(G')$.
\end{proof}

\begin{rem}
The result in the previous lemma holds in a more general situation, namely when $\sop(G') \cap F \subseteq \sop(G)$, provided that $G=G_0-G^+$ where $\sop (G_0) = \sop (G') \cap F$ and $G^+ \in \divisor(F)^+$ is a positive divisor of $F$. In the particular case that $G^+=0$ we are in the situation of Lemma  
\ref{lema1conorm}.
\end{rem}

\begin{lem}\label{lema2conorm}
Let $F'/F$ be a finite extension of algebraic function fields over $\ff_q$ of degree $m=[F':F]$.
If $G\in \divisor(F)$ and $G'=\con_{F'/F} (G)$ then $\rr(G) \subseteq \rr(G')$. Furthermore, 
\begin{enumerate}[$(a)$]
\item $\rr(G)\subseteq \tr(\rr(G'))$ if $(m,q)=1$, and \msk 

\item\label{itemb} $\tr(\rr(G')) \subseteq \rr(G)$ if $F'/F$ is Galois,
\end{enumerate}
where $\tr(\rr(G'))=\{\tr(x): x \in \rr(G')\}$  and $\tr$ is the trace map from $F'$ to $F$.
\end{lem}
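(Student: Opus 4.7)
My plan is to handle the containment $\rr(G)\subseteq \rr(G')$ first, then prove $(a)$ by a scaling trick, and finally $(b)$ via a valuation estimate that exploits the Galois hypothesis.

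For the first containment, I would simply invoke Lemma \ref{lema1conorm}. By the very definition of the conorm, $G'=\con_{F'/F}(G)=\sum_P v_P(G)\sum_{P'|P}e(P'|P)P'$, so $\sop(G')\cap F=\sop(G)$ and $v_{P'}(G')=e(P'|P)\,v_P(G)$ for every $P'|P$. Thus the hypotheses of Lemma \ref{lema1conorm} hold (with equality), giving $\rr(G)\subseteq\rr(G')$.

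For $(a)$, I would use that $\ff_q\subseteq F\subseteq F'$ and that on elements of the base field the trace acts as multiplication by $m$. Given $x\in\rr(G)$, by the first part $x\in\rr(G')$. Since $(m,q)=1$, the image $\bar m\in\ff_q$ is a unit, so $y:=\bar m^{-1}x\in F'$ still lies in $\rr(G')$ (multiplication by a constant unit does not change valuations). Finally $\tr_{F'/F}(y)=m\cdot y=\bar m\cdot(\bar m^{-1}x)=x$, so $x\in\tr(\rr(G'))$.

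For $(b)$, fix $y\in\rr(G')$ and put $x=\tr_{F'/F}(y)=\sum_{\sigma\in\gal(F'/F)}\sigma(y)$. I must check that $v_P(x)\ge -v_P(G)$ for every place $P$ of $F$. Because $F'/F$ is Galois, all ramification indices $e(P'|P)$ above a fixed $P$ coincide with a common value $e_P$, and $\gal(F'/F)$ acts transitively on the places above $P$ with $v_{P'}(\sigma(y))=v_{\sigma^{-1}(P')}(y)$. Pick any $P'|P$. By the strong triangle inequality and this transitivity,
\begin{equation*}
v_{P'}(x)\ \ge\ \min_{\sigma}v_{P'}(\sigma(y))\ =\ \min_{\sigma}v_{\sigma^{-1}(P')}(y)\ \ge\ \min_{P''|P}v_{P''}(y)\ \ge\ -e_P\,v_P(G),
\end{equation*}
where the last step uses $y\in\rr(G')$ together with $v_{P''}(G')=e_P\,v_P(G)$ for every $P''|P$. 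Since $x\in F$, we have $v_{P'}(x)=e_P\,v_P(x)$, and dividing yields $v_P(x)\ge -v_P(G)$. Hence $x\in\rr(G)$, proving $(b)$.

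The only delicate step is $(b)$: it is essential that Galois-ness forces the ramification indices above $P$ to be uniform, so that the valuation estimate on $\tr(y)$ can be transferred back to $F$ without loss. Parts $(a)$ and the initial inclusion are essentially formal, but care is needed in $(a)$ to observe that the hypothesis $(m,q)=1$ is exactly what makes $m$ invertible in the constant field so that the scaling argument goes through.
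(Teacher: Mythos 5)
Your proposal is correct and follows essentially the same route as the paper: the initial inclusion via Lemma \ref{lema1conorm} applied with $v_{P'}(G')=e(P'|P)v_P(G)$, part $(a)$ by scaling $x\in\rr(G)$ by $\bar m^{-1}\in\ff_q^\ast$ so that the trace recovers $x$, and part $(b)$ by the ultrametric inequality on $\sum_\sigma\sigma(y)$ combined with $v_{P'}(\sigma(y))=v_{\sigma^{-1}(P')}(y)$ and the uniformity of $e_P$ in a Galois extension. If anything, you are slightly more explicit than the paper in checking the valuation condition at all places $P$ of $F$ rather than only those in $\sop(G)$, but the argument is the same.
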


\begin{proof}
The fact that $\rr(G) \subseteq \rr(G')$ follows from the previous Lemma.

\sk ($a$)
 Now, let $x \in \rr(G)\subseteq F$. 
Note that in this case $\tr(x)=mx$. Let $x'=m^{-1} x$ where $m^{-1}$ is the inverse of $m$ modulo $p=char(\ff_q)$. Then, if $Q'\in \pl(F')$, we have that $\tr(x')=m^{-1}\tr(x)=x$ and 
$$v_{Q'}(x') = v_{Q'}(m^{-1}x)=v_{Q'}(x) =  e(Q'|Q) \, v_Q(x) \geq -e(Q'|Q) \, v_Q(G) = -v_{Q'}(G').$$
Thus, $x' \in \rr(G')$ and therefore $x \in  \tr(\rr(G'))$.

\sk

($b$) Now let us assume that $F'/F$ is Galois.  In this case we have that $e(Q'|Q)=e_Q$ is the same for every $Q'|Q$ and 
if $\G=Gal(F'/F)$ then $\tr(x)=\sum_{\sigma \in \G} \sigma(x)$.
Let $x\in \tr(\rr(G'))$, i.e.\@ $x=\tr(x')$ with $x'\in \rr(G')$. Then $v_{Q'}(x')\geq -v_{Q'}(G') $ for every $Q'\in \pl(F')$.
Let $Q \in \sop(G)$ and $Q'|Q$. 
Note that 
\begin{eqnarray*}
v_{Q'}(x) &=& v_{Q'}(\tr(x')) = v_{Q'} \Big( \sum_{\sigma \in \G} \sigma(x') \Big) \ge \min_{\sigma \in \G}\{v_{Q'}(\sigma(x')\} \\
&=& v_{Q'}(\sigma_0(x')) = v_{\sigma_0^{-1}(Q')}(x') = v_{Q''}(x') \geq -v_{Q''}(G'),
\end{eqnarray*} 
for some $\sigma_0$ and some $Q''|Q$. Then, by the above calculation, we have
$$v_Q(x) = \tfrac{1}{e_Q}v_{Q''}(x) 
=\tfrac{1}{e_Q}v_{Q'}(x) 
\geq \tfrac{-1}{e_Q} v_{Q''}(G') = -v_Q(G),$$ 
and thus $x \in \rr(G)$. 
\end{proof}

\begin{rem} \label{obs norma}
Item ($b$) also holds if we replace the trace map with the norm map 
from $F'$ to $F$, since $N(x')=\prod_{\sigma \in \G} \sigma(x')$ and we have 
$$v_{Q'}(N(x')) = v_{Q'} \Big( \prod_{\sigma \in \G} \sigma(x') \Big) = 
\sum_{\sigma \in \G}v_{Q'}(\sigma(x')) \ge \min_{\sigma \in \G} \{v_{Q'}(\sigma(x'))\}.$$
\end{rem}

The next result is a direct consequence of Lemmas \ref{lema1conorm} and \ref{lema2conorm}.
\begin{coro}\label{coro1conorm} 
If $F'/F$ is a finite Galois extension of degree $m$ and $G'=\con_{F'/F} (G)$, 
then $\tr(\rr(G')) \subseteq \rr(G)\subseteq \rr(G')$.
If, in addition, $(m,q)=1$, then we have 
\begin{equation} \label{trLG}
\tr(\rr(G'))= \rr(G).
\end{equation}
\end{coro}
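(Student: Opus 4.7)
The plan is to assemble the corollary as a direct consequence of the two preceding lemmas, with essentially no new computation.

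First I would observe that the hypothesis $G'=\con_{F'/F}(G)$ forces $\sop(G')\cap F=\sop(G)$ and, by the very definition of the conorm in \eqref{conorm}, $v_{Q'}(G')=e(Q'|Q)\,v_Q(G)$ for every $Q'\in\sop(G')$ with $Q=Q'\cap F$. Hence the hypotheses of Lemma \ref{lema1conorm} are met, and so $\rr(G)\subseteq \rr(G')$. This inclusion is in fact already packaged inside Lemma \ref{lema2conorm}, so I would just cite the latter.

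Next, for the inclusion $\tr(\rr(G'))\subseteq \rr(G)$, I would apply Lemma \ref{lema2conorm}\eqref{itemb} directly, since by hypothesis $F'/F$ is Galois. Combining these two inclusions yields the chain $\tr(\rr(G'))\subseteq \rr(G)\subseteq \rr(G')$, which is the first assertion of the corollary.

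Finally, assuming in addition that $(m,q)=1$, Lemma \ref{lema2conorm}$(a)$ gives the reverse inclusion $\rr(G)\subseteq \tr(\rr(G'))$. Together with $\tr(\rr(G'))\subseteq \rr(G)$ already obtained from the Galois hypothesis, this yields the equality \eqref{trLG}.

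There is no real obstacle here: the statement is packaged as a corollary precisely because both containments are established in the lemmas above, and the only thing to check is that the hypotheses of each lemma are satisfied by the data $(F'/F, G, G')$ given in the corollary. The mild subtlety worth noting explicitly is that the two parts of Lemma \ref{lema2conorm} use different hypotheses — $(a)$ needs $(m,q)=1$ but not Galois, while $(b)$ needs Galois but not $(m,q)=1$ — so the final equality genuinely requires both assumptions simultaneously, which is why it appears only in the second half of the corollary.
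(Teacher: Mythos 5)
Your proposal is correct and matches the paper, which states the corollary without a written proof, calling it ``a direct consequence of Lemmas \ref{lema1conorm} and \ref{lema2conorm}''; you simply spell out the routine verification that the hypotheses of each lemma are met. The observation that part $(a)$ and part $(b)$ of Lemma \ref{lema2conorm} rest on different hypotheses, both needed for the final equality, is accurate and consistent with how the corollary is stated.
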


\subsubsection*{Totally ramified places and cyclicity of AG-codes} 
We recall that a linear code $C \subseteq \ff_q^n$ is cyclic if it is closed under the cyclic shift of its coordinates. Namely, 
for every $(c_1,\ldots,c_n)\in C$ we have that the word $(c_n,c_1,\ldots,c_{n-1})$ is also in $C$.   

We will show that the conorm construction preserves cyclicity in a special case. 
First we give this simple result.

\begin{lem} \label{lemin subcod}
Let $F'/F$ a finite extension of function fields over $\ff_q$. Let $\cod=C_\rr^F(D,G)$ be an AG-code such that $\sop (D)$ has only totally ramified places.
Then $\cod$ is a subcode of its conorm code $\cod'=\con (\cod)$ and $\cod$ is cyclic if $\cod'$ is cyclic.
\end{lem}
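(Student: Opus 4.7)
The plan is to establish the two conclusions separately.

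For the inclusion $\cod\subseteq \cod'$, I would first exploit total ramification. If every $P_i\in \sop(D)$ is totally ramified in $F'/F$, then $m_{P_i}=1$, so by condition \eqref{condicion} there is a unique place $Q_i$ of $F'$ above $P_i$ with $e(Q_i|P_i)=m$. The fundamental equality $e(Q_i|P_i)\,f(Q_i|P_i)=[F':F]=m$ then forces $f(Q_i|P_i)=1$, so the residue class fields at $P_i$ and $Q_i$ are both canonically $\ff_q$. In particular $D'=\sum_{i=1}^n Q_i$, whence $n'=n$ and both $\cod,\cod'$ sit in the same ambient space $\ff_q^n$. Lemma \ref{lema2conorm} gives $\rr(G)\subseteq \rr(G')$, and for $x\in \rr(G)$ the value $x(P_i)$ coincides with $x(Q_i)$ under the identification $\vr_{P_i}/P_i=\vr_{Q_i}/Q_i=\ff_q$. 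Thus the evaluation maps at $D$ and at $D'$ agree on $\rr(G)$, yielding $\cod\subseteq \cod'$.

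For the transfer of cyclicity, since $n=n'$ the codes $\cod$ and $\cod'$ lie in a common $\ff_q^n$, so the cyclic shift $T$ acts on both. By hypothesis $T(\cod')=\cod'$, and for $c\in \cod$ one obtains $T(c)\in \cod'$. The remaining task is to show that $T(c)$ can be realized as the evaluation vector of some element of $\rr(G)$, not merely of $\rr(G')$. The natural approach, in the spirit of Lemma \ref{lema2conorm}(\ref{itemb}) and Corollary \ref{coro1conorm}, is to pick $y\in \rr(G')$ whose evaluation vector at $D'$ is $T(c)$ and to average it down to $F$ via the scaled trace $\tfrac{1}{m}\tr_{F'/F}(y)$. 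Total ramification forces every $\sigma\in\gal(F'/F)$ to fix each $Q_i$ and to act trivially on the residue field there, so $\sigma(y)(Q_i)=y(Q_i)$ for every $\sigma$, and therefore $(\tr y)(Q_i)=m\,y(Q_i)$. Provided $(m,q)=1$, the function $\tfrac{1}{m}\tr y$ then lies in $\rr(G)$ by Corollary \ref{coro1conorm} and evaluates at $P_i$ to $y(Q_i)=T(c)_i$, so $T(c)\in \cod$.

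The main obstacle I anticipate is that the lemma is stated for a general finite extension, while the trace argument sketched above uses that $F'/F$ is Galois with $(m,q)=1$. I expect the proof either to appeal implicitly to the Galois hypothesis underlying this section (the introductory paragraph announces that the section studies conorm codes of cyclic AG-codes by means of Galois extensions), or else to supply a more delicate ad hoc reduction in the non-Galois case, since a generic subcode of a cyclic code need not be cyclic and the subcode structure here is exactly what total ramification provides.
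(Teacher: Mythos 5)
Your argument for the inclusion $\cod\subseteq\cod'$ is essentially the paper's: total ramification gives a unique place $Q_i$ over each $P_i$, so $D'=Q_1+\cdots+Q_n$ and $n'=n$; the inclusion $\rr(G)\subseteq\rr(G')$ (the paper cites Lemma \ref{lema1conorm}, you cite Lemma \ref{lema2conorm}, which is the same fact) together with $x(P_i)=x(Q_i)$ then identifies the two evaluation maps on $\rr(G)$. This part is correct and matches the paper.

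For the cyclicity assertion the comparison is more interesting, because the paper's entire proof of it is the sentence ``the remaining assertion is clear,'' the implicit reasoning being that a subcode of a cyclic code inherits cyclicity. As you rightly sense, that inference is not valid in general: $\ff_q^n$ is cyclic and every code is a subcode of it. What is actually needed is exactly what you supply: given $c\in\cod$ with shift $T(c)\in\cod'=\mathrm{ev}_{D'}(\rr(G'))$, one must descend a representative $y\in\rr(G')$ of $T(c)$ to an element of $\rr(G)$ with the same evaluations, and your mechanism --- $\sigma(Q_i)=Q_i$ by uniqueness of the place above $P_i$, hence $(\tr y)(Q_i)=m\,y(Q_i)$, then divide by $m$ and invoke Corollary \ref{coro1conorm} --- is precisely the device the paper itself uses later in Theorem \ref{teo conorm} and Corollary \ref{C'=C}. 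But, as you note, this requires $F'/F$ Galois and $(m,q)=1$ (or the norm variant when $q\mid m$), neither of which is a hypothesis of the lemma as stated. So your proposal does not close the gap for an arbitrary finite extension, but neither does the paper: the paper's ``clear'' conceals exactly the difficulty you identified, and the second assertion of the lemma is only ever invoked under the stronger hypotheses of Theorem \ref{teo conorm}, where your argument goes through. In short: part one reproduces the paper's proof; part two is a legitimate and correctly flagged repair of a step the paper does not actually justify, valid under the Galois hypotheses in force where the lemma is applied.
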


\begin{proof}
Let $D=P_1+\cdots+P_n$ with every $P_i$ totally ramified in $F'$. Let $Q_i$ be the only place of $F'$ over $P_i$, for $i=1,\ldots,n$, and thus $D'=\sum_{i=1}^n Q_i$. 
Now, consider $c=(c_1,\ldots,c_n)\in \cod$. Then, there is some $x \in \rr(G)$ such that $c_i=x(P_i)$ for every $i=1,\ldots,n$. Since $G'=\con(G)$, we have that $\rr(G) \subset \rr(G')$, 
by Lemma~\ref{lema1conorm}. 
Hence, $x(Q_i)=x(P_i)=c_i$ and therefore $c \in \cod'$.
The remaining assertion is clear.
\end{proof}

We recall the condition for an AG-code $\cod=C_\rr(D=P_1+\cdots+P_n,G)$ to be cyclic. 
Any codeword of $\cod$ is of the form $c=(x(P_1), x(P_2), \ldots, x(P_n))$ with $x\in \rr(G)$. The code 
$\cod$ is cyclic if and only if $s(c) = (x(P_n), x(P_1), \ldots, x(P_{n-1}))$ is also in $\cod$. 
But this happens if and only if there exists $z\in \rr(G)$ such that  
\begin{equation} \label{cyc cond}
(x(P_n),x(P_1), \ldots,x(P_{n-1})) = (z(P_1),z(P_2), \ldots, z(P_n)).
\end{equation}
That is, $\cod$ is cyclic if and only if for each $x\in \rr(G)$ there is a $z=z(x)\in \rr(G)$ such that \eqref{cyc cond} holds.

\begin{exam}
Let $F'/F$ be an algebraic function field extension over $\ff_q$ of finite degree $m$. 
Let $\cod=C_\rr^F(P,G)$ where $P\notin \sop(G)$ is a rational place which splits completely in $F'$. The code $\cod$, being of length 1, is just  $\{0\}$ or the whole $\ff_q$ depending on the degree of $G$, and hence trivially cyclic.
If $\cod$ is not trivial, its conorm code is $\cod'=C_\rr^{F'}(D',G')$, where $D'=\sum_{P'|P} P'$ and $G'=\con(G)$. 
If $F'/F$ is a cyclic extension, the elements in the Galois group 
$\mathcal{G}=\mathrm{Gal}(F'/F)$ permute the places over $P$ and hence it is (equivalent to) a cyclic code.
\end{exam}

We will need the following well-known equality for the residue classes of a rational place on $F$ and any place in $F'$ 
lying over it. 

\begin{lem} \label{lemitin}
	Let $F'/\ff_{q^t}$ be a finite extension of algebraic function fields of $F/\ff_q$.
	If $P$ is a rational place of $F$, then $x(P)=x(Q)$ for any $x$ in the valuation ring $\mathcal{O}_P$  of the place $P$ 
	and every place $Q$ of $F'$ lying over $P$.
\end{lem}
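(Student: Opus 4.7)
My plan is to reduce the statement to the standard fact that a place $Q$ of $F'$ lying over $P$ induces a canonical embedding of the residue field at $P$ into the residue field at $Q$, which is compatible with the reduction of elements of $\mathcal{O}_P \subseteq \mathcal{O}_Q$; the rationality of $P$ is then used to identify the two residue classes via the unique copy of $\ff_q$ inside $\ff_{q^t}$.

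First I would recall the general setup: for $Q \mid P$, the valuation rings satisfy $\mathcal{O}_P \subseteq \mathcal{O}_Q$ and the maximal ideals satisfy $M_P = M_Q \cap \mathcal{O}_P$, where $M_P$ and $M_Q$ denote the maximal ideals of $\mathcal{O}_P$ and $\mathcal{O}_Q$, respectively. The inclusion $\mathcal{O}_P \hookrightarrow \mathcal{O}_Q$ therefore induces, by passage to the quotient, an injective ring homomorphism $\iota \colon \mathcal{O}_P / M_P \hookrightarrow \mathcal{O}_Q / M_Q$. By construction, $\iota(x + M_P) = x + M_Q$ for every $x \in \mathcal{O}_P$, so that the reduction maps commute:
\[
\iota(x(P)) = x(Q) \quad \text{for all } x \in \mathcal{O}_P.
\]

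Next I would use the rationality of $P$ to interpret both sides as elements of a common field. Since $\mathcal{O}_P / M_P = \ff_q$ and $\mathcal{O}_Q / M_Q$ is a finite extension of the constant field $\ff_{q^t}$ of $F'$, which in turn contains a unique copy of $\ff_q$, the subfield $\iota(\ff_q)$ of $\mathcal{O}_Q / M_Q$ must coincide with this canonical copy of $\ff_q$ (uniqueness being automatic for finite fields of characteristic $p$). Under this identification, $\iota(x(P)) = x(P)$, and the commutation above reads $x(P) = x(Q)$, which is the claim.

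I do not anticipate any substantive obstacle: the entire content of the lemma is the compatibility of residue class maps along an extension of valuation rings, combined with the canonical identification of $\ff_q$ inside $\ff_{q^t}$ made possible by the rationality of $P$.
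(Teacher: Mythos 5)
Your proof is correct and reaches the conclusion, but by a more structural route than the paper. The paper argues element-wise: since $P$ is rational, $x(P)=\alpha$ for some $\alpha\in\ff_q$, which is equivalent to $v_P(x-\alpha)>0$; then $v_Q(x-\alpha)=e(Q|P)\,v_P(x-\alpha)>0$ gives $x(Q)=\alpha$ as well. You instead invoke the canonical embedding $\iota\colon \mathcal{O}_P/M_P\hookrightarrow \mathcal{O}_Q/M_Q$ of residue fields induced by $\mathcal{O}_P\subseteq\mathcal{O}_Q$, together with the uniqueness of the subfield of order $q$ in a finite field. Both are standard one-step arguments; yours makes the functoriality of reduction explicit, while the paper's stays at the level of valuations and avoids residue-field maps entirely. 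One point you should tighten: knowing that $\iota(\ff_q)$ coincides with the canonical copy of $\ff_q$ as a \emph{subfield} does not by itself give $\iota(x(P))=x(P)$, since a priori $\iota|_{\ff_q}$ could act as a nontrivial power of the Frobenius. The correct justification is that for a constant $\alpha\in\ff_q$ one has $\alpha(P)=\alpha$ and $\alpha(Q)=\alpha$ under the standard identifications (the residue maps restrict to the inclusion on the constant fields), whence $\iota(\alpha)=\iota(\alpha(P))=\alpha(Q)=\alpha$, so $\iota$ fixes $\ff_q$ pointwise. This is precisely the fact the paper uses implicitly when it writes $x(Q)=\alpha=x(P)$; with that one line added, your argument is complete.
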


\begin{proof}
	Since $P$ is a rational place of $F/\ff_q$, the residue class field $F_P=\mathcal{O}_P/P$ of $P$ is just $\ff_q$, and thus $x(P)\in \ff_q$, for every 
	$x \in \mathcal{O}_P$. 
	This means that there is some $\alpha \in \ff_q$ such that $v_P(x-\alpha) > 0$. 
	That is, we have $x-\alpha \in P$ and hence 
	$0=(x-\alpha)(P)=x(P)-\alpha(P)$ from which we get $x(P)=\alpha$. 
	Now, let $Q$ be a place of $F'$ lying over $P$. Then, we have
	$v_Q(x-\alpha)=e(Q | P)\,v_P(x-\alpha) > 0$.
	Therefore, proceeding as before we get $x(Q)=\alpha=x(P)$ as desired. 
\end{proof}

Despite the trivial case of the previous example, we will show now that in finite Galois extensions of function fields, the conorm lift of (certain) cyclic AG-code is also cyclic.

\begin{thm} \label{teo conorm}
Let $F'/F$ be a Galois extension of algebraic function fields over $\ff_q$ of degree $m$, with $(m,q)=1$  or $q \mid m$.
Let $\mathcal{C} = C_{\rr}^F(D,G)$ be an AG-code such that every place in $\sop (D)$ is totally ramified in $F'$. 
Then, $\cod$ is cyclic if and only if $\cod' = \mathrm{Con}_{F'/F}(\cod)$ is cyclic.
\end{thm}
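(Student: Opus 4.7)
The plan is to handle the two directions separately. The direction ``$\cod'$ cyclic $\Rightarrow \cod$ cyclic'' is already contained in Lemma~\ref{lemin subcod} and requires no further argument. For the converse, assume $\cod$ is cyclic and take an arbitrary $x'\in\rr(G')$; the aim is to produce $y\in\rr(G)$ with $y(P_i)=x'(Q_i)$ for every $i=1,\ldots,n$. Once such a $y$ is in hand, cyclicity of $\cod$ furnishes $z\in\rr(G)$ whose evaluation at $(P_1,\ldots,P_n)$ equals the cyclic shift of $(y(P_i))_i$; then by Lemma~\ref{lema1conorm} one has $z\in\rr(G')$, and by Lemma~\ref{lemitin} the identity $z(Q_i)=z(P_i)$ holds, so the codeword $(z(Q_i))_i\in\cod'$ realises the cyclic shift of $(x'(Q_i))_i$, which shows that $\cod'$ is cyclic.

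The key input in producing $y$ is a Galois-invariance observation at the ramified places: because each $P_i$ is totally ramified in the Galois extension $F'/F$, the inertia group $I(Q_i|P_i)$ coincides with the whole group $G=\mathrm{Gal}(F'/F)$, so every $\sigma\in G$ fixes $Q_i$ setwise and acts as the identity on the residue field $\mathcal{O}_{Q_i}/Q_i=\ff_q$. In particular $\sigma(x')(Q_i)=x'(Q_i)$ for every $\sigma\in G$, so the residue of $x'$ at $Q_i$ is a $G$-invariant functional.

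In the tame case $(m,q)=1$, this invariance immediately yields $y$: set $y=\bar m^{-1}\tr_{F'/F}(x')$, where $\bar m$ denotes the nonzero image of $m$ in $\ff_q^\ast$. By Corollary~\ref{coro1conorm} the trace lies in $\rr(G)$, so $y\in\rr(G)$. Summing the $G$-invariance relation over $\sigma\in G$ gives $\tr(x')(Q_i)=m\,x'(Q_i)$, and Lemma~\ref{lemitin} then forces $y(P_i)=y(Q_i)=x'(Q_i)$, as required.

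The main obstacle is the case $q\mid m$, where $\bar m=0$ in $\ff_q$ and the trace trick collapses. My plan is to substitute the trace by the norm map $N_{F'/F}$, which by Remark~\ref{obs norma} also sends $\rr(G')$ into $\rr(G)$ and, by the same total-ramification observation, satisfies $N(x')(Q_i)=x'(Q_i)^m$. Converting this into the desired identity $y(P_i)=x'(Q_i)$ requires analysing the map $c\mapsto c^m$ on $\ff_q$, possibly after factoring $F'/F$ through the subfield $F_0=F'^{H}$ fixed by the (normal) wild-inertia $p$-Sylow $H$ of $G$, so that $F_0/F$ is tame of degree coprime to $q$ and can be handled by the previous paragraph, leaving a totally wildly ramified Galois $p$-extension $F'/F_0$ to be treated separately. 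I expect this reduction to be the technical heart of the proof.
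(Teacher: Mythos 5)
Your reduction of both directions to the lemmas, and your treatment of the tame case $(m,q)=1$, are correct and essentially identical to the paper's argument: the paper likewise uses that total ramification forces every $\sigma\in\gal(F'/F)$ to fix the unique place $Q_i$ above $P_i$, so that $\tr(x')(Q_i)=\bar m\,x'(Q_i)$, and then corrects by the scalar $\bar m^{-1}$ (the paper attaches the scalar to $z$ rather than to $\tr(x')$, which is only an organizational difference). Pulling $z$ back into $\rr(G')$ via Lemma \ref{lema1conorm} and matching residues via Lemma \ref{lemitin} is exactly what the paper does.

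The genuine gap is the case $q\mid m$, which you explicitly leave unfinished. The paper does not pass through the fixed field of the wild inertia subgroup at all: it takes $x=N(x')$, uses Remark \ref{obs norma} to get $x\in\rr(G)$, lets $z\in\rr(G)$ realize the cyclic shift of $(x(P_i))_i$, and sets $z'=z$ with no scalar correction, on the strength of $N(x')(Q_i)=\prod_{\sigma}x'(\sigma^{-1}Q_i)=x'(Q_i)^m$ together with the assertion that the $m$-th power map is the identity on $\ff_q$. So the step you defer --- the analysis of $c\mapsto c^m$ on $\ff_q$ --- is precisely the whole content of the paper's wild case. Note that for $c\in\ff_q$ one has $c^m=c$ for all $c$ exactly when $(q-1)\mid(m-1)$, which holds for instance when $m$ is a power of $q$ (since $c^q=c$), but is not an automatic consequence of $q\mid m$ alone (try $q=3$, $m=6$); your suspicion that this point needs care is therefore well founded. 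As written, however, your proposal neither completes the norm computation nor carries out the proposed factorization through $F'^{H}$ --- which would in any case require additional bookkeeping, namely that the conorm construction, total ramification, and the cyclicity argument all compose correctly through the intermediate field --- so the case $q\mid m$ remains unproved in your write-up.
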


\begin{proof}
By Lemma \ref{lemin subcod} we know that $\cod \subseteq \cod'$ and hence $\cod$ is cyclic if $\cod'$ is cyclic.

Now, assume that $\cod=C_\rr^F(D,G)$ is cyclic. We want to show that $\cod'= C_\rr^{F'}(D',G')$ is cyclic too. 
Suppose that $D=P_1+\cdots+P_n$ where the $P_i$'s are different rational places totally ramified in $F'$. For every $i=1,\ldots,n$, let 
$P_i'$ be the only place in $F'$ above $P_i$. Thus, we have
$D' = P_1'+\dots+P_n'$.

Hence, given a codeword $c'=(x'(P_1'),\ldots,x'(P_n')) \in \cod'$, where $x'\in \rr(G')$, we want to show that we can find an element 
$z'=z'(x') \in \rr(G')$ satisfying \eqref{cyc cond}, i.e.\@
\begin{equation} \label{cyc cond2}
(z'(P_1'),z'(P_2'), \ldots, z'(P_n')) = (x'(P_n'),x'(P_1'), \ldots,x'(P_{n-1}')).
\end{equation}
 Let us first assume that $(m,q)=1$. Let $\mathcal{G}=\gal(F'/F)$ and consider the element  
\begin{equation} \label{x gal}
x:=\tr(x') = \sum_{\sigma \in \mathcal{G}} \sigma(x') \: \in F .
\end{equation}
Since $\tr(\rr(G')) = \rr(G)$, by Corollary 
\ref{coro1conorm}, we have that $x \in \rr(G)$, actually.
Since $\cod$ is cyclic, by \eqref{cyc cond} there is an element $z\in \rr(G)$ such that
\begin{equation} \label{C cyc} 
(z(P_1),z(P_2), \ldots, z(P_n)) = (x(P_n),x(P_1), \ldots,x(P_{n-1})).
\end{equation}

Now, put $z'=m^{-1}z \in \rr(G')$ where $m^{-1}$ is the inverse modulo $p=\mathrm{char}(\ff_q)$. 
Hence for every $i$ mod $n$ we have
$$ z'(P_i') = m^{-1} z(P_i') = m^{-1} z(P_i) = m^{-1}x(P_{i-1}) = m^{-1} x(P_i')$$
where we have used \eqref{C cyc} and Lemma \ref{lemitin}.
Thus, by \eqref{x gal} we get
\begin{align*}
   z'(P_i') & = m^{-1}\Big( \sum_{\sigma \in \mathcal{G}}\sigma(x') \Big) (P_{i-1}') 
	= m^{-1}\sum_{\sigma \in \mathcal{G}}x'(\sigma^{-1}(P_{i-1}')) \\
     &  = m^{-1}\sum_{\sigma \in \mathcal{G}}x'(P_{i-1}') 
     = m^{-1} m \, x'(P_{i-1}') = x'(P_{i-1}') \,.
 \end{align*}  
In this way, we see that $z'\in \rr(G')$ satisfies \eqref{cyc cond2} and therefore $\cod'$ is a cyclic code as desired.

In the case $q \mid m$ we use a similar argument but now considering 
 $$x:=N(x')=\prod_{\sigma \in \mathcal{G}} \sigma(x') \: \in F ,$$
where $\mathcal{G}=\gal(F'/F)$. Now, we have that $N(\rr(G'))\subseteq \rr(G)$ by Remark \ref{obs norma} so that $x\in \rr(G)$. We define $z'=z$. For every $i$ mod $n$, we have
\begin{align*}
   z'(P_i') = z(P_i') = x(P_{i-1}') & = \Big( \prod_{\sigma \in \mathcal{G}}\sigma(x') \Big) (P_{i-1}') = \prod_{\sigma \in \mathcal{G}}x'(\sigma^{-1}(P_{i-1}')) \\
      & = \prod_{\sigma \in \mathcal{G}}x'(P_{i-1}') = (x'(P_{i-1}'))^m = x'(P_{i-1}') \,.
 \end{align*}
In this way we see that $z'\in \rr(G')$ satisfies \eqref{cyc cond2} and therefore $\cod'$ is a cyclic code, as desired.
\end{proof}

We have seen in Lemma \ref{lemin subcod} that under certain conditions, the original code $\cod$ is a subcode of its conorm lift $\cod'$.
If, in addition, the code $\cod$ is cyclic, then both codes coincide.  That is, as algebraic codes over $\ff_q$, $\cod$ 
and $\cod'$ are the same code. 
\begin{coro} \label{C'=C}
Under the same hypothesis of Theorem \ref{teo conorm}, if $\cod$ is cyclic then $\cod=\cod'$. 
\end{coro}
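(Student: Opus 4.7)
The plan is to combine Lemma~\ref{lemin subcod} with the identities already derived in the proof of Theorem~\ref{teo conorm}. Because every place in $\sop(D)$ is totally ramified in $F'$, each $P_i$ has a unique place $P_i'$ of $F'$ above it, and $P_i'$ is rational; in particular $n'=n$ and both $\cod$ and $\cod'$ are subspaces of $\ff_q^n$. Lemma~\ref{lemin subcod} gives $\cod \subseteq \cod'$, so it suffices to establish the reverse inclusion $\cod' \subseteq \cod$.

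Starting from an arbitrary $c' = (x'(P_1'), \ldots, x'(P_n')) \in \cod'$ with $x' \in \rr(G')$, I would produce an accompanying element $x \in \rr(G)$ by setting
$$x = \tr(x') \text{ if } (m,q)=1, \qquad x = N(x') \text{ if } q \mid m,$$
where $\tr$ and $N$ denote the trace and norm maps from $F'$ to $F$. By Corollary~\ref{coro1conorm} (respectively Remark~\ref{obs norma}), $x \in \rr(G)$, so $c = (x(P_1), \ldots, x(P_n))$ is a codeword of $\cod$.

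The key step is then to identify $c'$ as an $\ff_q^*$-multiple of $c$. Since $P_i'$ is the only place above $P_i$, every $\sigma \in \gal(F'/F)$ fixes $P_i'$, whence $\sigma(x')(P_i') = x'(\sigma^{-1}(P_i')) = x'(P_i')$ for each $\sigma$. Combining this with Lemma~\ref{lemitin} (which gives $x(P_i) = x(P_i')$), the computations performed in the two cases of the proof of Theorem~\ref{teo conorm} collapse to
$$x(P_i) = m \cdot x'(P_i') \qquad \text{or} \qquad x(P_i) = x'(P_i')^m = x'(P_i'),$$
depending on whether $(m,q)=1$ or $q \mid m$. Either way one obtains $c = \lambda\, c'$ for a unit $\lambda \in \ff_q^*$ (namely $\lambda = m$ in the first case, $\lambda = 1$ in the second), so $c' = \lambda^{-1} c \in \cod$ by linearity.

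The only obstacle is bookkeeping: one must check that the scalar $\lambda$ relating $c$ and $c'$ is a well-defined unit of $\ff_q$ in each case. This is precisely what the hypotheses $(m,q)=1$ (giving the invertibility of $m$ modulo the characteristic of $\ff_q$) and $q \mid m$ (giving $\alpha^m = \alpha$ for every $\alpha \in \ff_q$) are designed to provide, and both facts are already exploited in the proof of Theorem~\ref{teo conorm}. Hence no new technical input is needed beyond that theorem.
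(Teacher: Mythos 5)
Your proof is correct, but it takes a genuinely different and in fact more economical route than the paper's. The paper obtains $\cod'\subseteq\cod$ by invoking the cyclicity of $\cod'$ established in Theorem \ref{teo conorm}: given $c'\in\cod'$, it observes that the element $z'$ constructed there to realize the shift $s(c')$ actually lies in $\rr(G)$, so that $s(c')\in\cod$, and then recovers $c'=s^{n}(c')\in\cod$ using the cyclicity of $\cod$. You bypass the shift entirely: evaluating $x=\tr(x')$ (resp.\ $N(x')$) at the totally ramified places gives $x(P_i)=x(P_i')=\bar m\,x'(P_i')$ (resp.\ $x'(P_i')^m$), so $c'$ itself is an $\ff_q^\ast$-multiple of a codeword of $\cod$, and linearity of $\cod$ finishes the argument. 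What your version buys is worth noting: it never uses the hypothesis that $\cod$ is cyclic, so at least in the case $(m,q)=1$, where $\bar m\in\ff_q^\ast$ is genuinely invertible, it shows that total ramification of $\sop(D)$ in a Galois extension already forces $\cod=\cod'$; the paper's version buys nothing extra but stays entirely at the level of codewords and the already-proved cyclicity. One caveat, inherited from the paper rather than introduced by you: in the case $q\mid m$ the identity $\alpha^m=\alpha$ for all $\alpha\in\ff_q$ holds only when $m\equiv 1\pmod{q-1}$ (e.g.\ when $m$ is a power of $q$), not for arbitrary $m$ divisible by $q$; since the proof of Theorem \ref{teo conorm} relies on the very same identity, your proposal is exactly as valid there as the paper's own argument.
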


\begin{proof}
We know by Lemma \ref{lemin subcod} that $\cod\subseteq\cod'$. 
Let $c' \in \cod'$. Then $c'=(x'(P_1'),\ldots,x'(P_n'))$ with $x'\in \rr(G')$. By Theorem \ref{teo conorm}, $\cod'$ is 
cyclic and hence the cyclic shift $s(c')\in \cod'$. Thus, there is $z'\in \rr(G')$ satisfying the cyclic condition
$$z'(P_1')=x(P_n'), \quad z'(P_2')=x(P_1'), \quad \ldots, \quad z'(P_n') = x(P_{n-1}').$$ 
In the proof of Theorem \ref{teo conorm} we showed how to construct this element $z'$ performing the shift, and that $z'$ is actually in $\rr(G)$, by construction. Therefore, $s(c')\in \cod$ and, in this way, $c'=s^n(c') \in \cod$. This implies that $\cod' \subseteq \cod$ and thus 
$\cod=\cod'$.  
\end{proof}

In other words, given a cyclic AG-code $\cod=C_\rr(D,G)$ over a finite Galois extension $F'/F$ of degree $m$, such that either $m$ is coprime to $q$ or $q$ divides $m$, and where the support of $D$ is totally ramified, the conorm lift gives a geometric representation $\cod'=\con (\cod)$ of $\cod$ in a function field of greater genus.

\section{Classic codes as conorm codes}
In this final section we show that repetition codes, Hermitian codes and Reed-Solomon codes can be considered, in many general cases, as conorm codes of rational AG-codes.

\subsubsection*{Repetition codes} 
Any repetition code 
	$$\mathcal{R}_q(n)=\{(c,\ldots,c) : c\in \ff_q\}$$
	of length $n\le q+1$ can be represented as a 
	rational AG-code in $F=\ff_q(x)$ as 
	$\cod=C_\rr(D,(y))$, with $ D=P_1+\cdots+P_n$, 
	where $P_1,\ldots,P_n$ are rational places of $F$ and $(y)$ is any principal divisor 
	disjoint with $D$. In fact, if $c\in \cod$, then 
	$$c=(x(P_1),\ldots,x(P_n))=(x,\ldots,x),$$ 
	since $x\in \rr((y))=\ff_q$ ($\deg G=0$ implies $\dim \rr(G)=1$) and hence, 
\begin{equation} \label{rep}
\mathcal{R}_q(n)=\cod=C_\rr(D,(y)).
\end{equation}
Furthermore, we have the following.

\begin{lem}
	In the case of geometric extensions, the conorm code of a repetition code is a repetition code. 
\end{lem}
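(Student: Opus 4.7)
The plan is to start from the representation $\mathcal{R}_q(n)=C_\rr^F(D,(y))$ in \eqref{rep} and compute its conorm explicitly. Writing $D=P_1+\cdots+P_n$ and $G=(y)$, the first task is to identify the lifted divisor $G'=\con_{F'/F}((y))$. Using the transformation rule $v_{P'}(y)=e(P'|P)\,v_P(y)$ for $y\in F\subseteq F'$ together with the definition \eqref{conorm} of the conorm, one sees at once that $G'$ coincides with the principal divisor $(y)^{F'}$ of $y$ regarded as an element of $F'$.

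Because $F'/F$ is geometric, $F$ and $F'$ share the same full constant field $\ff_q$. Hence, by exactly the same reasoning used by the authors in \eqref{rep} to conclude $\rr((y))=\ff_q$ (a principal divisor has degree zero, so its Riemann--Roch space is generated by the constants), we obtain $\rr(G')=\rr((y)^{F'})=\ff_q$. Consequently every codeword of $\cod'=\con(\cod)$ has the form
$$\bigl(x(Q_1^{(1)}),\ldots,x(Q_1^{(m_{P_1})}),\ldots,x(Q_n^{(1)}),\ldots,x(Q_n^{(m_{P_n})})\bigr)$$
with $x\in\ff_q$, and since any constant of $F'$ evaluates to itself at every rational place, this word simplifies to $(x,x,\ldots,x)\in(\ff_q)^{n'}$, where $n'=\sum_{i=1}^{n}m_{P_i}$ is the length of $\cod'$ as computed in Corollary \ref{corito}.

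Therefore $\cod'=\mathcal{R}_q(n')$, which is precisely the desired statement. The only technical point requiring any care is the identity $\con((y)^F)=(y)^{F'}$; once this, and the preservation of the constant field in a geometric extension, are in place, the rest of the argument is immediate from the definition of the conorm code and the structure of $\rr((y)^{F'})$. I foresee no genuine obstacle in this proof.
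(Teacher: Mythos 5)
Your proposal is correct and follows essentially the same route as the paper: both rest on the identity $\con_{F'/F}((y)^F)=(y)^{F'}$ and on the fact that, since the geometric extension preserves the constant field, $\rr((y)^{F'})=\ff_q$, so every codeword of the conorm code is constant. Your version merely spells out the valuation computation behind $\con((y)^F)=(y)^{F'}$ and the evaluation of constants at the places $Q_i^{(j)}$, which the paper leaves implicit.
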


\begin{proof}
	Consider the repetition code $\mathcal{R}_q(n)$ as in \eqref{rep} defined over a rational function field $F=\ff_q(x)$.
	Suppose that $F'/F$ is a geometric extension over $\ff_q$ of degree $m$ and genus $g'>0$.
	Since $\con_{F'/F}((y)^F)=(y)^{F'}$, we have
	$$\cod' = \con_{F'/F} \big( C_\rr^F(D,(y)^F) \big) = C_\rr^{F'} \big(D', (y)^{F'} \big),$$
	with $D' = D_1+\cdots+D_n$.
	But $\cod'$ is a repetition code by the previous comments and thus we get 
	$$\cod'=\mathcal{R}_q(n'),$$
	with $n'$ as in \eqref{n'}, as we wanted to show. 
\end{proof}

By using Kummer extensions we can give a partial converse of the previous result.
In fact, we will show that any repetition code is the conorm lift to a Kummer extension of the field $\ff_q$ viewed as a rational AG-code. 

\begin{prop} \label{prop rep}
	If $n \mid q-1$, the repetition code $\mathcal{R}_q(n)$ is a conorm code. 
\end{prop}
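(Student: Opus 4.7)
The plan is to realize $\mathcal{R}_q(n)$ as the conorm lift of the trivial length-$1$ repetition code under a suitable Kummer extension of rational function fields. The hypothesis $n\mid q-1$ will be used essentially, both to guarantee that $\ff_q$ contains a primitive $n$-th root of unity $\zeta$ (so that the Kummer extension has degree $n$ and is geometric) and to ensure complete splitting of a chosen rational place.

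First I would set $F=\ff_q(z)$ and $F'=\ff_q(x)$ with $z=x^n$, so that $F'/F$ is the geometric Kummer extension of degree $n$ defined by $y^n-z$. By the standard splitting criterion for Kummer extensions (see, e.g., \cite{Sti}), the rational place $P_1$ of $F$, namely the simple zero of $z-1$, splits completely in $F'/F$: indeed $v_{P_1}(z)=0$ and the residue of $z$ at $P_1$ equals $1$, trivially an $n$-th power in $\ff_q^{\ast}$. The $n$ rational places of $F'$ lying above $P_1$ are $Q_0,Q_1,\dots,Q_{n-1}$, the simple zeros of $x-\zeta^{i}$ for $i=0,\dots,n-1$.

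Next I would take as seed code $\cod=C_\rr^F(P_1,0)$ with $G=0$. Since $\rr(0)=\ff_q$ and evaluation at a rational place acts as the identity on constants, $\cod=\ff_q=\mathcal{R}_q(1)$. Because $P_1$ splits completely in $F'/F$, each ramification index equals $1$, so $\con_{F'/F}(P_1)=Q_0+\cdots+Q_{n-1}$, and clearly $\con_{F'/F}(0)=0$. Hence
$$\con_{F'/F}(\cod)=C_\rr^{F'}(Q_0+\cdots+Q_{n-1},\,0)=\{(c,\dots,c):c\in\ff_q\}=\mathcal{R}_q(n),$$
which exhibits $\mathcal{R}_q(n)$ as a conorm code.

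The only nontrivial ingredient is the Kummer splitting criterion invoked in the first step, which needs exactly the hypothesis $n\mid q-1$; I do not foresee any serious obstacle. Conceptually, the construction works because $\gal(F'/F)\cong\z/n\z$ permutes the $Q_i$ transitively while acting trivially on constants, so the residues at the $Q_i$ of any $c\in\ff_q$ form a constant $n$-tuple.
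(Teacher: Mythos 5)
Your proof is correct, and the strategy is the same as the paper's: exhibit $\mathcal{R}_q(n)$ as $\con_{F'/F}(\cod)$ where $\cod=C_\rr^F(P,0)=\ff_q$ is a length-one code supported at a single rational place $P$ that splits completely in a degree-$n$ Kummer extension $F'/F$, with $n\mid q-1$ supplying both the cyclicity of the extension and the complete splitting. The one substantive difference is the choice of extension. The paper takes $F=\ff_q(x)$ and $F'=F(y)$ with $y^n=(x-\alpha)(x-\alpha^{-1})$, so that $F'$ has genus $\lfloor(n-1)/2\rfloor$, and deduces complete splitting of the zero of $x$ from Kummer's theorem applied to the reduction $T^n-1$. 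You instead take $F'/F=\ff_q(x)/\ff_q(x^n)$, so that both fields are rational; complete splitting of the zero of $z-1$ is then visible directly from the factorization $x^n-1=\prod_{i=0}^{n-1}(x-\zeta^i)$ into distinct linear factors, which makes the whole verification elementary (and the disjointness of $G=0$ from $D$ is automatic, consistent with the paper's use of an arbitrary principal divisor in \eqref{rep}). Your version is simpler; the paper's version has the additional feature of realizing the repetition code as a conorm code over a function field of \emph{positive} genus, which fits the paper's broader theme of lifting codes to higher-genus fields, but that is not required by the statement of Proposition~\ref{prop rep}.
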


\begin{proof}
	Consider the rational function field $F=\ff_q(x)$ and let $F'=F(y)$ be the Kummer extension of $F$ given by
	$$y^n = (x-\alpha)(x-\alpha^{-1})$$
	where $n\mid q-1$, $\alpha\in \ff_q^*$ and $\alpha\ne \alpha^{-1}$.
	
	By Proposition 6.3.1 in \cite{Sti} we have that $F'/F$ is cyclic of degree $n$ and $\ff_q$ is the full constant field of $F'$ whose genus is
	$g = [\tfrac{n-1}{2}]$. Also, the places 
	$P_{\alpha}$ and $P_{\alpha^{-1}}$, the zeroes of $x-\alpha$ and $x-\alpha^{-1}$ respectively, are totally ramified in $F'/F$.
	
	Let $\varphi(T) = T^n - (x-\alpha)(x-\alpha^{-1}) \in \ff_q[T]$ and let $\bar{\varphi}(T)$ be its reduction mod $P_0$, the zero of $x$ in $F$. Since
	$x(P_0)=0$ and $n\mid q-1$ then
	$$\bar{\varphi}(T)= T^n -1 = \prod_{i=1}^n(T-a_i) \in \ff_q[T] \,. $$
	Therefore, by Kummer Theorem, $P_0$ splits completely in $F$.
	
	Let $D=P_1+\cdots + P_n$, where $P_1,\ldots,P_n$ are the (rational) places of $F'$ lying over $P_0$ and $G=(z)^{F'}$ with $z\in F$. 
	By the previous example we have 
	$$\cod' := C_\rr(D,(z)^{F'})=\mathcal{R}_q(n)$$ and 
	clearly $\cod'= \con_{F'/F} (\cod)$ where $\cod=C_\rr(P_0, (z)^F) = \ff_q$.
\end{proof}

\subsubsection*{Hermitian codes} 
We now show that certain Hermitian codes are conorm codes over Hermitian function fields of rational AG-codes.

Consider the Hermitian function field $H=\ff_{q^2}(x,y)$ as the degree $q$ extension of the rational function field $F=\ff_{q^2}(x)$, given by the equation 
$$y^q+y=x^{q+1}.$$
The field $F$ has $q^2+1$ rational places $P_1,\ldots,P_{q^2}$ and $P_\infty$, the pole of $x$. For each $\alpha\in \ff_{q^2}$ there are $q$ 
elements $\beta \in \ff_{q^2}$ satisfying 
$$\beta^q+\beta = \alpha^{q+1},$$ 
and for all such pairs $(\alpha,\beta)$ there is a unique place 
$P_{\alpha,\beta}$ in $H$ such that $x(P_{\alpha,\beta})=\alpha$ and $y(P_{\alpha,\beta})=\beta$. Thus, $H$ has $q^3+1$ rational places, 
$q$ places over each rational place $P_{\alpha,\beta}$ of $F$ and $Q_\infty$, the common pole of $x$ and $y$ in $H$,  lying over $P_\infty$. 

Hermitian codes are the 1-point AG-codes defined as
\begin{equation} \label{Hr}
\mathcal{H}_a = C_\rr^H(D', aQ_\infty),
\end{equation}
where 
\begin{equation} \label{D'}
D' = \sum_{P\in \mathbb{P}(H)\smallsetminus \{Q_\infty\}} P =\sum_{\beta^q+\beta = \alpha^{q+1}} P_{\alpha,\beta}.
\end{equation}

\begin{rem}
In order to construct codes from the Hermitian function field in which the divisor $G$ is not a one-point divisor, one can consider the function field $H$ not as an Artin-Schreier extension of $\ff_q^2(x)$, but rather as a Kummer extension of $F=\ff_q^2(y)$ defining the divisor $G$ by means of the zeroes of $y^q+y$ and $D$ by the places which are unramified over $F$ (see Example 4.8 in \cite{BQZ18}).
\end{rem}

\begin{prop}
If $q\mid a$ then $\mathcal{H}_a$ is a conorm code.
\end{prop}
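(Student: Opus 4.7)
The plan is to realize $\mathcal{H}_a$ directly as $\con_{H/F}(\cod)$ for an explicit rational AG-code $\cod$ on $F=\ff_{q^2}(x)$. Write $a=qb$ with $b\in\na$ (this is where the hypothesis $q\mid a$ is used), and set
$$D = \sum_{\alpha\in \ff_{q^2}} P_\alpha \qquad\text{and}\qquad G = bP_\infty$$
in $F$, where $P_\alpha$ is the zero of $x-\alpha$ and $P_\infty$ is the pole of $x$. Let $\cod = C_\rr^F(D,G)$; this is a rational AG-code of length $q^2$, and its defining divisors are disjoint since $P_\infty\notin\sop(D)$.

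Next I would verify that the hypothesis \eqref{condicion} needed to form the conorm code is satisfied for every $P\in\sop(D)$. For each $\alpha\in\ff_{q^2}$, since $\alpha^{q+1} = \N_{\ff_{q^2}/\ff_q}(\alpha)\in\ff_q$, the polynomial $T^q+T-\alpha^{q+1}$ splits completely in $\ff_{q^2}[T]$ (its roots form an $\ff_q$-affine line through any single root), so Kummer's theorem (or the explicit description of places of $H$ already given in the excerpt) shows that $P_\alpha$ splits completely in $H/F$ into $q$ rational places. Thus $m_{P_\alpha}=q=m$ and $e(P'|P_\alpha)=1=m/m_{P_\alpha}$, as required.

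Then I would compute the two conorm divisors. Since every $P_\alpha$ splits completely,
$$\con_{H/F}(D) \;=\; \sum_{\alpha\in\ff_{q^2}} \sum_{P'\mid P_\alpha} P' \;=\; \sum_{\beta^q+\beta=\alpha^{q+1}} P_{\alpha,\beta} \;=\; D',$$
which is exactly the divisor in \eqref{D'}. Since $P_\infty$ is totally ramified in $H/F$ with unique extension $Q_\infty$ and ramification index $q$,
$$\con_{H/F}(G) \;=\; \con_{H/F}(bP_\infty) \;=\; b\,e(Q_\infty\mid P_\infty)\,Q_\infty \;=\; qb\,Q_\infty \;=\; aQ_\infty.$$

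Combining these two identities with Definition~\ref{concod} yields
$$\con_{H/F}(\cod) \;=\; C_\rr^H\bigl(\con_{H/F}(D),\,\con_{H/F}(G)\bigr) \;=\; C_\rr^H(D',aQ_\infty) \;=\; \mathcal{H}_a,$$
which is the desired conclusion. There is no real obstacle here; the only content of the proof is the remark that the divisibility $q\mid a$ is exactly what is needed to express $aQ_\infty$ as a conorm from $F$, since $\con_{H/F}$ scales the place $P_\infty$ by a factor of $q$.
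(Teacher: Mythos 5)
Your proof is correct and follows essentially the same route as the paper: write $a=qb$, take the rational code $C_\rr^F(D,bP_\infty)$ with $D$ the sum of the $q^2$ finite rational places, and use that these split completely while $P_\infty$ is totally ramified with index $q$, so the conorm divisors are $D'$ and $aQ_\infty$. Your version simply spells out the verification of condition \eqref{condicion} and the splitting via the trace map, which the paper leaves implicit.
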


\begin{proof}
Using the above notation, let us consider the code 
$$\cod_t = C_\rr^F(D, t P_\infty),$$ 
where $D = P_1+\cdots+P_{q^2}$.
Note that if $q\mid a$ then 
	$$\mathcal{H}_a = \con_{H/F} (\cod_{a/q})$$ 
where $H=\ff_{q^2}(x,y)$ is the Hermitian field given by $y^q+y=x^{q+1}$, because we have $\con (D)= D'$ as in \eqref{D'} and $\con (sP_\infty) = sqQ_\infty$ for any $s$.
\end{proof}

As an application of these results, we  show  next  that the identity \eqref{con dual} fails to hold in general, that is 
the dual of a the conorm code of $\cod$ is not necessarily the conorm of the dual code of $\cod$. 

\begin{exam}\label{counterexample}
Consider the Hermitian function field $H = \ff_{q^2}(x,y)$, i.e.\@ the extension of the rational function field 
	$F=\ff_{q^2}(x)$ defined by $y^q+y=x^{q+1}$. Consider the rational AG-code $\mathcal{C}_q=C_{\rr}^F(D,qP_\infty)$, where $D$ is the sum of all the rational places of $F$ different from $P_\infty$, with parameters $[q^2, q+1]$ and let $\mathcal{H}_a = C_{\rr}^H(D', aQ_\infty)$ be the Hermitian code, where $D'$ is the sum of all the places over the support of $D$, with parameters $[q^3, k']$. 
	
	 We have seen that $\mathcal{H}_a$ is the conorm code of $C_q$ if $q$ divides $a$. Thus we can take, for instance,  $a=3q^2$. By ($b$) of Proposition 8.3.3 of \cite{Sti}, the code $\mathcal{H}_{3q^2}$ has dimension 
	$$k' = \dim(\mathcal{H}_{3q^2}) = 3q^2+1 - \tfrac12 q(q-1) = \tfrac{5q^2+q}{2}+1.$$ 
	
	Now, on the one hand, since $\mathcal{H}_a^\perp = \mathcal{H}_{q^3+q^2-q-2-a}$ for any $a$, in our case we have
	$$\con(\cod_q)^\perp  = \mathcal{H}_{3q^2}^\perp = \mathcal{H}_{q^3-2q^2-q-2}$$
with parameters $[q^3,k^\perp]$. It is clear that 
$$k^\perp =q^3 - k' = q^3-\tfrac{5q^2+q+2}{2}>0,$$	 
for any $q\ge 3$.	

On the other hand, $\cod_q^\perp=C_\rr^F(D,G^\perp)$ with $G^\perp = D-qP_\infty + (\eta)$  is a $[q^2,q^2-q-1]$-code, where $\eta$ is a Weil differential. The conorm code is $\con(\cod_q^\perp)$ with parameters $[q^3, \tilde k]$.
By \eqref{k'} of Proposition \ref{prop C'} we have 
\begin{eqnarray*}
\tilde k & \ge & q(\deg G^\perp +1-g)-\tfrac 12 \deg \dif(H/F) \\ 
&= &  q(q^2-q-2) -\tfrac 12 (q^2-q) =  q^3-\tfrac{3q^2-3q}{2} = q^3- \tfrac 32 q(q+1) >0,
\end{eqnarray*}
for any $q\ge 3$.

It is straightforward to check that $k^\perp < \tilde k$ if and only if $q^2+1>q$ and this last inequality holds for every $q$.
Since the dimensions of the codes $\con(\cod_q)^\perp$ and $\con(\cod_q^\perp)$ are different, we have that 
$$\con(\cod_q)^\perp \ne \con(\cod_q^\perp),$$ 
as we wanted to show. For instance, if $q=3$ the parameters of  
the codes $\con(\cod_3)^\perp$ and $\con(\cod_3^\perp)$ over $H=\ff_9(x,y)$, $y^3+y=x^4$, are $[27,2]$ and $[27,\ge 9]$, respectively. Also, the parameters of the codes $\con(\cod_4)^\perp$ and $\con(\cod_4^\perp)$ over $H=\ff_{16}(x,y)$ with $y^4+y=x^5$ are $[64,21]$ and $[64,\ge 34]$, respectively.
\end{exam}

\subsubsection*{Reed-Solomon codes} 
As a final application, we show that classical Reed-Solomon codes can be obtained as conorm codes of rational cyclic AG-codes.

\begin{prop}
Any Reed-Solomon code is a conorm code. 
\end{prop}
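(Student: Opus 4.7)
The plan is to invoke Corollary~\ref{C'=C}, which asserts that a cyclic AG-code $\cod$ coincides with its own conorm lift $\con_{F'/F}(\cod)$ provided $F'/F$ is a geometric Galois extension of degree coprime to $q$ (or a multiple of $q$) in which every place of $\sop(D)$ is totally ramified. It therefore suffices to realize any classical Reed-Solomon code as a rational cyclic AG-code $\cod$ over some $F$, and to exhibit an ambient extension $F'/F$ meeting the hypotheses of Theorem~\ref{teo conorm}; the Reed-Solomon code will then be presented as the conorm of the rational cyclic AG-code $\cod$ itself.

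Concretely, I would represent the Reed-Solomon code $\mathrm{RS}_k(n)$ with $n\mid q-1$ as $\cod = C_\rr^F(D,(k-1)P_\infty)$, where $F = \ff_q(x)$, $D = P_{\alpha_1}+\cdots+P_{\alpha_n}$, the $\alpha_i$ are the $n$-th roots of unity in $\ff_q^\ast$, and $P_\infty$ is the pole of $x$. That this is the classical primitive Reed-Solomon code and that it is cyclic is standard. To construct $F'/F$, I would take the Kummer cover $F' = F(y)$ defined by
$$y^n = x^n - 1 = \prod_{i=1}^n (x-\alpha_i).$$
Because $\gcd(n,q)=1$, the field $\ff_q$ contains all $n$-th roots of unity, and $x^n-1$ has only simple roots and is therefore not an $r$-th power in $F$ for any $r\mid n$ with $r>1$, Proposition~6.3.1 of \cite{Sti} guarantees that $F'/F$ is a geometric cyclic Galois extension of degree $n$.

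It then remains to verify that each $P_{\alpha_i}$ is totally ramified in $F'/F$. Since $\alpha_i$ is a simple root of $x^n-1$, we have $v_{P_{\alpha_i}}(x^n-1)=1$, and $\gcd(n,1)=1$ implies, via the Kummer ramification analysis, that there is a unique place of $F'$ above $P_{\alpha_i}$ with ramification index $n$. Thus every place in $\sop(D)$ is totally ramified in $F'$, and all hypotheses of Theorem~\ref{teo conorm} are satisfied. Corollary~\ref{C'=C} now gives $\cod = \con_{F'/F}(\cod)$, exhibiting the Reed-Solomon code as a conorm code of a rational cyclic AG-code. The main design step, and the only nontrivial one, is the choice of the defining polynomial $x^n-1 = \prod(x-\alpha_i)$: its simple roots simultaneously force total ramification at every point of $\sop(D)$, which is exactly what the hypothesis of Corollary~\ref{C'=C} demands; everything else is routine application of the machinery developed in Section~5.
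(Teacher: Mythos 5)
Your proposal is correct and follows essentially the same route as the paper: represent the Reed--Solomon code as a rational cyclic AG-code with $D$ supported at the $n$-th roots of unity and a one-point (up to a principal divisor) $G$ at $P_\infty$, pass to the Kummer cover $y^n=\prod_i(x-\alpha_i)=x^n-1$ so that every place of $\sop(D)$ is totally ramified, and invoke Theorem~\ref{teo conorm} together with Corollary~\ref{C'=C}. The only (harmless) deviations are that you allow any length $n\mid q-1$ rather than fixing $n=q-1$, and you take $G=(k-1)P_\infty$ directly instead of the paper's twist $G=(k-1)P_\infty+(u)$, which is not needed for the untwisted Reed--Solomon code.
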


\begin{proof}
Let $n=q-1$ and $k$ be such that  $1\leq k \leq n$ and let us consider the Reed-Solomon code
$$\cod_k=\{(f(\beta), f(\beta^2),\ldots, f(\beta^n)): f \in \rr_k)\} $$ 
over $\ff_q$, where $\beta\in \ff_q$ is a primitive element of the subgroup $\ff_q^\ast$ and $$\rr_k=\{f\in \ff_q[x]:\deg f\leq k-1\}.$$ 

The code $\cod_k$ can be represented as a rational AG-code as follows. 
Let $F=\ff_q(x)$ be a rational function field and denote by $P_i$ the zero of $x-\beta^i$ in $F$ for $i=1,\ldots, n,$ and by $P_\infty$ the pole of $x$ in $F$.  Let $u\in F$ be such that $u(P_i)=1$ for $i=1, \ldots,n$ (such an element exists by the Approximation theorem). Now, letting 
$$D=P_1+\cdots+P_n \qquad \text{and} \qquad G=(k-1)P_\infty+(u),$$ 
where $(u)$ denotes the principal divisor of $u$ in $F$, we have (see Proposition 2.3.5 of \cite{Sti}) 
that $\cod_k=\cod_\rr(D,G)$.

Let us consider now the field extension $F'/F=\ff_q(x,y)/\ff_q(x)$ where $x$ and $y$ satisfy 
$$y^n = (x-\beta)(x-\beta^2) \cdots (x-\beta^n).$$ 
By Proposition 6.3.1 in \cite{Sti} we have that $F'/F$ is a cyclic extension of degree $n$ and the places $P_1,\ldots, P_n$ are totally ramified in $F'/F$. In this way, we are in the hypothesis of Theorem \ref{teo conorm} and Corollary \ref{C'=C} and thus $$\cod'=\con(\cod_k)=\cod_\rr(\con(D), \con(G))$$ 
satisfies $\cod'=\cod_k$, as we wanted to show.
\end{proof}

\section*{Final Remarks}
\subsubsection*{Generalized conorm codes}
The definition of conorm code given in Section 2 can be generalized as follows. 
Let $F'/F$ be a finite extension of function fields over $\ff_q$ and let $\cod = C_\rr^F(D,G)$ be an AG-code as in \eqref{CDG}. Suppose   that every place $P'$ of $F'$ over any place $P\in\sop(D)$  is rational. We define the \textit{generalized conorm code} associated to $\cod$, or just the \textit{conorm of $\cod$}, as
\begin{equation} \label{Con C}
\cod' = \con_{F'/F} (\cod) = C_\rr^{F'}(D',G'), 
\end{equation}
where
$$D' =\sum_{i=1}^n \sum_{Q|P_i} Q
\qquad \text{ and } \qquad G'=\con_{F'/F} (G).$$

\sk 

\noindent \textit{Note:} For any given AG-code $\cod = C_\rr^F(D,G)$ over a function field $F/\ff_q$, we can always find a field extension $F'$ of $F$ such that the condition on the rationality of the places above the places in the support of $D$ holds: we just take a suitable constant field extension $F'=F\ff_{q^t}$ of $F$.

\sk 
This construction of generalized conorm codes behaves well on finite towers of function fields. 
That is, given $F\subset F' \subset F''$ and $\cod=C_\rr^F(D,G)$ we have
\begin{equation}\label{wellbhv}
\con_{F''/F}(\cod) = \con_{F''/F'} (\con_{F'/F} (\cod)),
\end{equation}
or, in other words, if $\cod' = \con_{F'/F}(\cod)$, then
\begin{equation*}
\cod'' = \con_{F''/F'} (\cod') = \con_{F''/F}(\cod).
\end{equation*}
This is a direct consequence of the fact that 
$$\con_{F''/F}(G) = \con_{F''/F'} (\con_{F'/F} (G)),$$ 
for any divisor $G$ in $F$ and the fact that if every place $R$ in $F''$ over a place $P$ in the support of $D$ is rational, then every place $Q$ in $F'$ over a place $P$ in the support of $D$ is also rational. Furthermore
$$\bigcup_{P\,\in\,\sop{D}}\{R \mid P : R \in \pl(F'') \} = 
\bigcup_{Q\,\in \,\sop{D'}}\{R \mid Q: R \in \pl(F'') \}.$$

Notice that  with the definition of conorm codes given in Section 2, the equality \eqref{wellbhv} was obtained in the particular cases of complete splitting or total ramification of the places in $\sop(D)$. Moreover when every place in $\sop(D)$ splits completely in $F'$, we have that $D'$ is actually the conorm of $D$.

\subsubsection*{Code-based cryptography}

Different families of codes have proved to be insecure for code-based cryptography and there have been many attempts to replace traditional Goppa codes. In \cite{JM96}, Janwa and Moreno proposed to use a collection of AG-codes on curves for the McEliece cryptosystem, but this was broken for codes on curves of genus $g \leq 2$ by Faure and Minder (\cite{FM08}). 

Couvreur, M\'arquez-Corbella, Mart\'inez-Moro, Pellikaan and Ruano (\cite{CMP14} and \cite{MMPR}) have managed to break certain higher-genus cryptosystems based on evaluation codes, but none of these attacks are against subfield subcodes. The security status of the McEliece public key cryptosystem using algebraic geometry codes is, to the best of our knowledge,  still not completely settled and remains as an open problem.

The construction of taking conorm codes can be iterated, so in principle it can be applied to a code defined over the base field of a tower of function fields. By the results in Theorem \ref{teo conorm} and Corollary \ref{C'=C} we can begin with a cyclic AG-code defined over the rational function field ($g=0$). Under appropriate conditions, we get the same code (as a conorm code) defined on a bigger field with greater genus. The procedure can  be repeated in such a manner to get the same algebraic cyclic code defined on a function field of genus arbitrarily large. Can this procedure be useful in some way in code-based cryptography? This is a question we hope to answer in a forthcoming work.

\bibliographystyle{plain}

\begin{thebibliography}{1}
\bibitem{BQZ18} \textsc{D. Bartoli, L. Quoos, G. Zini.} 
\textit{Algebraic geometric codes on many points from Kummer extensions.} Finite Fields and Their Applications  \textbf{52}, (2018) 319--335.


\bibitem{CMP14} \textsc{A.\@ Couvreur, I.\@ M\'arquez-Corbella, R.\@ Pellikaan}. \textit{A polynomial time attack against algebraic geometry code based public key cryptosystem}. IEEE International Symposium on Information Theory, 2014, 1446--1450.


\bibitem{GS96} \textsc{A. Garcia, H. Stichtenoth}. 
\textit{On the asymptotic behaviour of some towers of function fields over finite fields}.
Journal of Number Theory \textbf{61:2}, (1996) 248--273.


\bibitem{FM08} \textsc{C.\@ Faure, H.\@ Minder}. \textit{Cryptanalysis of the McEliece cryptosystem over hyperelliptic codes}. 11th Int.\@ Workshop Algebraic and Combinat.\@ Coding Theory, Pamporovo Bulgaria  \textbf{8},  (2008) 99--107. 


\bibitem{JM96} \textsc{H.\@ Janwa, O.\@ Moreno}. \textit{McEliece public crypto system using algebraic-geometric codes}. Designs, Codes and Cryptography  \textbf{8},  (1996) 293--307. 


\bibitem{MMPR} \textsc{I.\@ M\'arquez-Corbella, E.\@ Mart\'inez-Moro, R.\@ Pellikaan, D.\@ Ruano}.
\textit{Computational aspects of retrieving a representation of an algebraic geometry code}. Journal of Symbolic Computation \textbf{64}, (2014) 67--87,


\bibitem{MP} \textsc{C.\@ Munuera, R.\@ Pellikaan}.  
\textit{Equality of geometric Goppa codes and equivalence of divisors}.
 Journal of Pure and Applied Algebra 90, (1993) 229-252.


\bibitem{Sti} \textsc{H.\@ Stichtenoth}. \textit{Algebraic function fields and codes.} Second edition. Graduate Texts in Mathematics, 254. Springer-Verlag, Berlin, 2009.


\bibitem{VH} \textsc{C.\@ Voss, T.\@ Hoholdt}. 
\textit{An explicit construction of a sequence of codes attaining the Tsfasman-Vladut-Zink bound. The first steps}. IEEE Transactions on Information Theory \textbf{43:1}, (1997) 128--135. 


\bibitem{Wu04} \textsc{J.\@  Wülftange}. \textit{On the construction of some towers over finite fields}. Finite fields and applications, 154--165, Lecture Notes in Comput.\@ Sci., 2948, Springer, Berlin, 2004.

\end{thebibliography}

\end{document}